\newtheorem{thm}{Theorem}[section]
\newtheorem{corollary}[thm]{Corollary}
\newtheorem{proposition}[thm]{Proposition}
\newtheorem{lemma}[thm]{Lemma}
\newcommand{\ba}{\mathbf{a}}
\newcommand{\bb}{\mathbf{b}}
\newcommand{\bm}{\mathbf{m}}
\newcommand{\bo}{\mathbf{o}}
\newcommand{\bp}{\mathbf{p}}
\newcommand{\bx}{\mathbf{x}}
\newcommand{\NN}{\mathbb{N}}
\newcommand{\RR}{\mathbb{R}}
\def\centerarc[#1](#2)(#3:#4:#5)%
\numberwithin{equation}{section}
\title{Explicit lower bounds for opaque sets\\ of unit square and unit disc}
\author[1]{Markus Kiderlen\footnote{The work of MK was supported by the
		 Aarhus University Research Foundation, under the travel grant AUFF-E-2024-6-13}}
\author[2]{Florian Pausinger\footnote{The work of FP is funded by FCT - Fundação para a Ciência e a Tecnologia, I.P., through national funds, under the project UID/04561/2025}}
\affil[1]{Aarhus University, Aarhus, Denmark}
\affil[2]{Faculdade de Ci\^{e}ncias da Universidade de Lisboa, Lisbon, Portugal}
\date{}
\begin{document}

\maketitle
\begin{abstract}
Explicit lower bounds for the length of the shortest opaque set for the unit disc and the unit square in the Euclidean plane are derived. The results are based on an explicit application of the general method of Kawamura, Moriyama, Otachi and Pach \cite{pach}. 
Employing a recent observation by Steinerberger on the possible orientations of straight barriers with length close to Jones' bound, we improve the bound in  \cite{pach} by more than a factor $3$. The
bound for barriers of the unit disc is new and based on the idea that the free parameters in the general method from \cite{pach} can be optimized due to the strong symmetry properties of the disc. Our approach illustrates both the power and the limitations of the method. \\[1ex]
{\bf MSC: } 52A10\\
{\bf Keywords: } Opaque set; barrier; beam detection constant; lower bound.
\end{abstract}

\section{Introduction}
Let  a compact convex set $K \subset \RR^2$ be given. 
A set $B \subseteq \RR^2$ is called a \emph{barrier} or an \emph{opaque set} for $K$ if any line that intersects $K$ also intersects $B$. Note that parts of a barrier can lie outside $K$ and barriers need not be connected in general.

Since there always are barriers with finite length  for a given bounded set $K$ (consider e.g.~the boundary of $K$), 
one can ask for the \emph{shortest} barrier for a given $K$. 
In the case of polygons, this question was already asked more than a century ago by Mazurkiewicz \cite{maz}. Despite  the simplicity of its statement the problem is intriguingly difficult and  no answer is known even for very simple sets $K$ such as a square, a disc or an equilateral triangle. 

One way to approach this problem is to find lower bounds for the shortest length of a barrier, and we will give prominent examples below. To obtain upper bounds for that length, `good'  barriers for a given $K$ have been constructed. Kawohl \cite{kawohl} gives a comprehensive overview of the general problem of finding barriers and presents different constructions for shapes like the unit square and the unit disc. Furthermore, there is a linear time algorithm (linear in $n$) to find a connected polygonal barrier whose length is at most 1.5716 times longer than the optimal barrier of a given convex polygon $P$ with $n$ vertices; see \cite{pach1}. 

In the following, we state this problem in more concise terms. 
We will restrict considerations to \emph{rectifiable} barriers in the sense of \cite{pach}, i.e.~barriers that can be written as unions of at most countably many rectifiable curves. We therefore are interested in 
\[
b(K)=\inf\{ |B|: B \text{ is a rectifiable barrier for } K\}, 
\] 
where $|\cdot|$ denotes length. 
It is shown in \cite[Lemma 4]{pach} that for any rectifiable barrier $B$ there is a \emph{straight} barrier (a finite union of line segments) with a length that is arbitrarily close to $|B|$. Thus, we also have 
\[
b(K)=\inf\{ |B|: B \text{ is a straight barrier for } K\}, 
\] 
so we can and will work with straight barriers. 
It is open if this infimum is attained, even if ``straight'' is replaced by ``rectifiable''. However, if the infimum is taken over (not necessarily straight) closed rectifiable barriers having at most $k$ connected components for some fixed $k\in \NN$, a minimal barrier exists, see \cite{faber} or the outline of the argument in \cite{kawohl}. We will not need the existence of a shortest (straight) barrier
for what follows.

The determination of strong lower bounds for $b(K)$ is a difficult problem. For a long time 
the best lower bound was the one determined by Jones  in 1962 (see \cite{jones}) for the unit square and states that the length of any rectifiable barrier $B$ of a compact convex set $K$ with perimeter $2p$ is at least $p$; 
see \cite[Lemma 1]{pach} for a short proof based on the Crofton formula.
The following theorem and its constructive proof improves on this bound and is the starting point for the subsequent calculations. 
\begin{thm}[Theorem 3, \cite{pach}]\label{thmPach}
For any compact convex set $K$ with perimeter $2p$ that is not a triangle, there is $\delta=\delta(K) > 0$ such that every rectifiable barrier of $K$ has length at least $p+\delta$.
\end{thm}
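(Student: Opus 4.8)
The plan is to exploit the fact that the Jones/Crofton lower bound $|B|\ge p$ is only tight in a very rigid limiting situation, and to quantify the deficit whenever $K$ is not a triangle. Recall the Crofton-type argument behind $|B|\ge p$: parametrising lines $\ell_{\varphi,t}$ by direction $\varphi\in[0,\pi)$ and signed distance $t\in\RR$, one has $|B|\ge \frac12\int_0^\pi\int_\RR \#(B\cap \ell_{\varphi,t})\,dt\,d\varphi$, while every line meeting $K$ must meet $B$, so the inner integrand is $\ge 1$ on the $t$-interval of length $w_K(\varphi)$ (the width of $K$ in direction $\varphi$). Since $\frac12\int_0^\pi w_K(\varphi)\,d\varphi = \tfrac12\cdot(\text{perimeter}) = p$, equality forces $\#(B\cap\ell)=1$ for almost every line meeting $K$. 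The first step is therefore to understand the structure of a barrier that is \emph{nearly} extremal: I would fix a small $\eta>0$, suppose $|B|\le p+\eta$, and derive that for all but an $O(\eta)$-fraction (in the Crofton measure) of lines meeting $K$, the barrier meets them in exactly one point.

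Second, I would localise. Because $K$ is convex and not a triangle, its boundary either is strictly convex on some arc, or is a polygon with at least four edges; in the polygonal case one finds two ``non-adjacent'' exposed edges whose inward normals span more than a half-plane of directions, and in the smooth-arc case one finds an arc of strictly positive curvature. In either case there is a positive-measure set $D\subset[0,\pi)$ of directions and a region near $\partial K$ such that the supporting geometry forces any single-point barrier to behave like a differentiable curve whose tangent is essentially pinned by the ``one intersection per line'' condition — this is the mechanism by which the lower bound should improve. Concretely, I expect that restricting to lines with direction in $D$, a barrier achieving one intersection per line and total length close to $p$ would have to consist of arcs that are simultaneously tangent to two families of lines forced by two different exposed pieces of $\partial K$, which is impossible, or forces the curve to be longer than the corresponding contribution $\tfrac12\int_D w_K(\varphi)\,d\varphi$ by a definite amount.

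Third, I would turn this rigidity into a quantitative gain: choose a finite set of ``test directions'' adapted to the two exposed pieces of $\partial K$, and show that for lines in small cones around these directions the single-intersection hypothesis is contradicted unless $B$ contains, within a fixed compact neighbourhood of $\partial K$, extra length bounded below by an explicit constant depending only on $K$ (via the curvature of the smooth arc, or the edge lengths and angles in the polygonal case). Summing the Crofton estimate over the complement of these cones (yielding at least $p$ minus the cones' contribution) and adding the forced extra length in the cones gives $|B|\ge p+\delta$ for an explicit $\delta=\delta(K)>0$, after taking $\eta$ small; a contradiction with $|B|\le p+\eta$ for small $\eta$ then shows the bound holds for \emph{every} rectifiable barrier with the same $\delta$. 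The main obstacle is the second step: making the ``rigidity'' argument genuinely robust and quantitative rather than merely showing that the extremal configuration is degenerate — one must control how a near-extremal barrier can ``cheat'' by using measure-zero sets of double intersections or by placing length far from $\partial K$, and show this cheating still costs a definite amount. I would handle this by a compactness/normal-families argument on barriers restricted to a large disc containing $K$, reducing to the exactly-extremal case where the triangle is the unique equality configuration, and then extracting the explicit $\delta$ from the stability of that characterisation.
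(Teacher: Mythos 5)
Your outline starts from the right observation (Jones' bound $|B|\ge p$ via Crofton is rigid, so near-extremal barriers must be ``almost'' one-intersection-per-line), and your dichotomy ``strictly convex arc vs.\ polygon with $\ge 4$ edges'' correctly identifies where ``not a triangle'' enters. But the heart of your argument is delegated to a compactness/normal-families reduction to the exactly extremal case, and that step does not go through. The paper itself emphasizes that the infimum $b(K)$ is not known to be attained, even over rectifiable barriers; barriers may have unboundedly many components, may have length escaping to infinity outside any fixed disc containing $K$, and may approach the ``one intersection per line'' regime only in a weak, non-uniform way. Your rigidity claim (``arcs simultaneously tangent to two families of lines\ldots which is impossible'') is not formulated as a quantitative inequality, so when you then try to ``extract an explicit $\delta$ from stability of the characterisation,'' there is in fact no characterisation of a minimizer to be stable about. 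In other words, you have correctly listed the obstacles (cheating via measure-zero double-intersection sets, cheating by putting length far away) but not supplied the mechanism that bounds the cost of cheating.

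The proof in \cite{pach} sidesteps compactness entirely and quantifies the ``waste'' directly. The relevant tools are Lemma~\ref{lem5} (a subset $B'\subset B$ whose Crofton contribution falls short of $4|B'|$ by $4\delta$ forces $|B|\ge p+\delta$), Lemma~\ref{lem6} (any barrier material outside the enlarged region $K_\zeta$ wastes a factor $1-\cos\frac\zeta2$ of its length), and Lemma~\ref{lem7} (two pieces of barrier that are $\lambda$-separated and made of segments making angle $\ge\lambda$ with the horizontal are hit twice by a positive-measure family of lines, hence waste). Then one picks four exposed points $\bx_0,\dots,\bx_3$ of $\partial K$ (this is exactly where ``not a triangle'' is used), forms thin strips $R_i$ near the corresponding tangent lines, and argues: every line parallel to the tangent at $\bx_i$ and close to it must hit the barrier; that intersection is either far outside $K$ (waste by Lemma~\ref{lem6}) or lies in $R_i$, so $|B\cap R_i|\ge 2\eta$ for some fixed $\eta>0$. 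A short combinatorial step then finds two $R_i$'s and sub-barriers of length $\ge\eta$ inside them that are $\lambda$-separated, and Lemma~\ref{lem7} delivers the explicit deficit. No limit barrier and no equality characterisation are needed. Your plan could be repaired by replacing the compactness step with an explicit double-counting estimate of this kind, but that is essentially re-deriving Lemmas~\ref{lem6} and~\ref{lem7}; as written, the proposal has a genuine gap at its central step.
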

For arbitrary compact convex sets $K$ the proof of this theorem can be used as a general framework for working out explicit lower bounds improving Jones' result. This was done in \cite{pach} for the centered unit square $K=[-1/2,1/2]^2$.
The triangle case was partially resolved in \cite{izumi} using the method from \cite{pach} together with an additional idea, the so-called restricted barriers.

The following table gives a list of known and new lower bounds for the lengths of rectifiable barriers for two simple sets $K$.
\begin{center}
	\begin{tabular}{  l | c | c  r | c | c r }
		\hline
		& peri- &  best known & [ref.] & new lower & best known &
		[ref.] \\[0ex] 
		shape & meter &  lower bound &  &  bound (Thm.~2)& upper bound&\\[0.5ex] 
		\hline &&&&&&\\[-2.2ex] &&&&&$\sqrt2+\sqrt6/2$&\\
		unit square & $4$ & $2+2\cdot 10^{-5}$ & \cite{pach}
		&
		$2+ 6.3\cdot 10^{-5}$&
		$\approx 2.639$& \cite{jones62}\\  
		\hline&&&&&&\\[-2.2ex]
		unit disc & $2\pi$ & $\pi$ & \cite{croft2} &$\pi + 1.076 \cdot 10^{-6}$ &$\approx 4.799$&\cite{makai}\\  
		\hline
	\end{tabular}
\end{center}

The case of the unit disk is also known as the beam detection constant problem \cite[Sect.~8.11]{finch}; it is listed as problem A30 in \cite{croft} and discussed in a 1995 issue of Scientific American \cite{stewart}, see also \cite{joris, wieacker}.

Our main results are the following lower bounds. 

\begin{thm} \label{thm:main}
Let $B$ be a rectifiable 	barrier of $K\subset\RR^2$. 

\begin{itemize}
	\item[(i)] 	If $K$ is the unit square, we have $|B|>   2 + 6.3  \cdot 10^{-5}$.
	\item[(ii)] If $K$ is the unit disc, we have $|B|>  \pi + 1.076 \cdot 10^{-6}$.
	
\end{itemize}
\end{thm}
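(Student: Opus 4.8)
The plan is to follow the constructive scheme underlying Theorem~\ref{thmPach}, but to instantiate every free parameter of that scheme numerically and --- in the disc case --- to exploit the rotational symmetry of $K$ in order to choose those parameters optimally. I would set up the Crofton picture first: parametrise the lines of $\RR^2$ by $(\theta,t)\in[0,\pi)\times\RR$ (direction, or normal direction, and signed distance to the origin) with the rigid-motion invariant measure $d\theta\,dt$, normalised so that $\int \#(S\cap\ell_{\theta,t})\,d\theta\,dt = 2|S|$ for every rectifiable $S$, and so that the family $\mathcal{L}_K$ of lines meeting a convex body $K$ has measure equal to its perimeter $2p$. If $B$ is a barrier then $\#(B\cap\ell)\ge 1$ on $\mathcal{L}_K$, which already recovers Jones' bound $|B|\ge p$ after discarding the non-negative contribution of lines missing $K$. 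Writing $\sigma:=|B|-p\ge 0$, the task reduces to an explicit positive lower bound for the surplus integral
\[
 2\sigma \;\ge\; I(B) \;:=\; \int_{\mathcal{L}_K}\bigl(\#(B\cap\ell)-1\bigr)\,d\theta\,dt \;\ge\; 0 .
\]

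The next step is the \emph{window decomposition}, which is the core of the method of \cite{pach}. One chooses a finite family of ``windows'' $W_1,\dots,W_N$ --- small convex subregions of $K$ sitting against $\partial K$ --- whose associated line families $\mathcal{L}_{W_i}$ are pairwise disjoint (or overlap in a controlled way), together with, for each $i$, a \emph{local surplus bound}: every barrier of $W_i$ --- in particular the global barrier $B$, even though $B$ may use points outside $W_i$ --- satisfies $\int_{\mathcal{L}_{W_i}}\bigl(\#(B\cap\ell)-1\bigr)\,d\theta\,dt\ge \eta_i>0$. Summation then gives $|B|\ge p+\tfrac12\sum_i\eta_i$. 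The local surplus bound is a quantitative beam-detection statement --- a thin enough region against $\partial K$ cannot be ``singly covered'' by any curve --- and $\eta_i$ is estimated from the geometry of $W_i$: its chord length, the local curvature or corner angle of $\partial K$ there, and an auxiliary threshold parameter. For $K=[-1/2,1/2]^2$ (not a triangle, so Theorem~\ref{thmPach} applies) the natural windows sit at the four corners; redoing the estimates of \cite{pach} with a sharper choice of the threshold and of the window dimensions --- rather than the round values used there --- is what improves $\delta$ from $2\cdot10^{-5}$ to $2.3\cdot10^{-5}$.

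For the disc the decisive new ingredient is symmetry. Since $\partial K$ is smooth with constant curvature, a window of a prescribed shape may be rotated to \emph{any} boundary point, all copies being congruent, so the single-window inequality of \cite{pach} holds simultaneously for the whole rotational family $\{W_\varphi:\varphi\in[0,2\pi)\}$. Rather than packing finitely many disjoint windows --- which wastes the gaps between them --- I would integrate the pointwise surplus inequality over all placements $\varphi$ and divide by the overcounting multiplicity of the family, converting the one-window estimate into an averaged estimate valid for the continuum of windows; rotational invariance of both $|B|$ and the Crofton measure makes this averaging lossless. The problem then collapses to a low-dimensional optimisation of the window's free parameters (essentially its angular opening and the auxiliary threshold), which can be carried out explicitly and yields $\delta=1.076\cdot10^{-6}$, the first improvement over Croft's bound $\pi$.

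I expect the principal obstacle to be the local surplus estimate together with the bookkeeping of overlaps. One must quantify, with fully explicit constants and no hidden lower-order terms, how much a barrier has to exceed the Crofton value on the lines through a thin window even when it is free to route through the rest of $\RR^2$; this needs a careful case analysis of how $B$ can cross the window's chords and boundary arc, plus a convexity or shortcut argument showing that ``detouring outside'' the window never helps beyond a computable amount. Verifying genuine disjointness of the $\mathcal{L}_{W_i}$ in the square case, and computing the overcounting multiplicity of the rotated family correctly in the disc case, are the places where an error would most easily slip in; finally, the closing numerical optimisation must be made rigorous (by explicit monotonicity or interval arithmetic) so that the stated digits are certified.
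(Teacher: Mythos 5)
Your plan hinges on a \emph{local surplus bound}: that for a thin window $W_i$ against $\partial K$, every barrier $B$ of $K$ satisfies $\int_{\mathcal{L}_{W_i}}(\#(B\cap\ell)-1)\,d\ell\ge\eta_i>0$, with an explicit $\eta_i$. This is exactly the quantitative content of Theorem~\ref{thmPach} applied to the small convex body $W_i$, so as stated it is circular: you are assuming a (harder, because $W_i$ is tiny) version of the very estimate you are trying to make explicit. Nothing in \cite{pach} supplies such a one-window inequality, and the paper's mechanism is in fact global rather than local. What actually produces waste is not a single window but a \emph{pair}: the key Lemma~\ref{lem7} says that two pieces of a straight barrier which are $\lambda$-separated (sitting in regions $R^-,R^+$ that face each other across a strip) are doubly hit by a positive-measure family of lines, giving a quantified surplus via Corollary~\ref{coro2}. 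Complementing this, Lemma~\ref{lem6}/Corollary~\ref{coro1} says that barrier material far outside $K$ (outside $K_\zeta$) is inherently inefficient. The proof then shows that the barrier must contain length at least $2\eta$ in each of four regions $R_0,\dots,R_3$ near four boundary points, and proceeds by dichotomy: either a large chunk of $B$ lies outside $K_\zeta$ (Corollary~\ref{coro1} gives the bound) or, by a combinatorial argument from \cite{pach}, segments of total length $\eta$ in two of the $R_i$'s face each other (Corollary~\ref{coro2} gives the bound). No summation over disjoint windows occurs, and no per-window surplus is ever claimed.

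The disc-specific idea of averaging a local estimate over all rotations is likewise not what the paper does, and it would inherit the same unproved ingredient. In the paper the rotational symmetry is used much more modestly: it makes $K_\zeta$ itself a disc $(1+r)U$ with an explicit relation between $r$ and $\zeta$ (Proposition~\ref{prop:Uzeta}), which renders the geometry of the four regions $R_i$ and the separating strips fully explicit and lets one optimize the two free parameters $(r,t)$ numerically in the critical case of opposing $R_i$'s (Theorem~\ref{thm:optDelta}), with a separate check for neighboring $R_i$'s. For the square the improvement over \cite{pach} comes from slightly shrinking the $R_i$'s, retuning $(\zeta,t)$, and using a tighter enclosing disc diameter $D$ in Corollary~\ref{coro2}, not from a sharper local surplus. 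You should also note that genuinely disjoint $\mathcal{L}_{W_i}$ are impossible for windows in a bounded region, so the overlap bookkeeping you flag as a risk would in fact be unavoidable; the paper sidesteps the issue entirely by never decomposing the line space.
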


The proof of these results follows very tightly  \cite{pach}, where the currently best explicit lower bound for the
unit square, is derived. We refine their arguments slightly to get a better bound for the unit square. This is mainly done to outline where we follow and where we deviate from their approach. In the case of the unit square, we enhance the method in  \cite{pach} using the observation in \cite{steinerberger} that a straight barrier with a length close to the Jones bound must consist of line segments that are approximately parallel to the sides of the square. This is the main source of the improvement in Theorem \ref{thm:main} compared to \cite{pach}. The case of the unit disc makes the proof of \cite[Theorem 3]{pach} explicit in this special case. When $K$ is the unit disc, the set $K_\zeta$ (defined later) is simply a disc and it is thus possible to optimize the parameters in the proof. 

The purpose of this paper is not only to give explicit lower bounds for barriers of particular convex bodies but also to show the potential and limitations of the approach in \cite{pach}. This approach only uses the barrier property in a small neighborhood of a few selected boundary points, leaving the 'central part' of the convex body unexploited. Therefore, even a finer analysis based on this method will not supply lower bounds that are by magnitudes better than the ones derived here. In particular, to close or at least narrow the gap to the best known upper bounds, one would have to improve the crucial 
\cite[Lemma 7]{pach} (quoted below, see Lemma \ref{lem7}), among others  to exploit the local orientation of the line segments in a straight barrier better. 

The paper is organized as follows. 
In Section \ref{sec2} we summarize the main results and the method of \cite{pach}. Section \ref{sec3} presents first our construction for the unit square (a mere variation of \cite{pach}, enhanced with ideas from \cite{steinerberger}). 
We then turn to the unit disc, introducing our construction and deriving an optimization problem that exploits the method in the best possible way. 

\section{The method of Kawamura, Moriyama, Otachi and Pach}
\label{sec2}

Before we outline the method, we need some preliminary notation. 
Unless otherwise stated angles of lines are always understood with respect to the horizontal axis and taken modulo $2\pi$. 

\begin{figure}[h]
	\begin{center}
		\begin{tikzpicture}[scale=0.75]  
			\draw [black,fill=black!20] plot [smooth cycle] coordinates {(11.1,0.3) (12.8,1.1) (12.4,3.1) (11.4,4.1) (10.5, 4.5) (9.5,4.5) (8.7, 4.2) (8.2,2.5) (8.5,1)};
			\node at (10,3) {$A^+$}; 
			\draw [black,fill=black!20] plot [smooth cycle] coordinates {(3.1,-0.1) (4.1,1.3) (4.4,3.3)  (1.5,4.7) (0.7, 4.4)  (0.5,1.2)};
			\node at (2,3) {$A^-$}; 
			
			\draw[black] (4.5,5) --(6.5,2.5) -- (8.5,0);
			\draw[black] (5.5,2.5) --(6.5,2.5) -- (7.5,2.5);
			\draw[black] (4.5,0) --(6.5,2.5) -- (8.5,5);
			
			\node at (6.5,2.5) {$\bullet$}; 
			
			\node at (6,2.8) {$\lambda$};
			\node at (7,2.8) {$\lambda$};
			
			\node at (8.7,4.8) {$g^+$};
			\node at (4.25,4.8) {$g^-$};
			
		\end{tikzpicture}
	\end{center}
	
	\caption{\label{fig:sep} Two $\lambda$-separated sets $A^+$ and $A^-$.}
\end{figure}

\begin{enumerate}
\item For an acute angle $\lambda$,  two sets $A^+, A^- \subset \RR^2$ are \emph{$\lambda$-separated} if there are lines $g^{\pm}$ with angles $\pm \lambda$, respectively, such that $A^-$ is strictly below $g^-$ and strictly above $g^+$ and $A^+$ is strictly  above $g^-$ and strictly below $g^+$; see Fig.~\ref{fig:sep} for an illustration.
\item For $X \subset \RR^2$ and an angle $\alpha\in \RR$ we define the set 
$$X(\alpha)=\{x \sin \alpha - y \cos \alpha: (x,y) \in X \} \subseteq \RR. $$
This is the \emph{projection} of $X$ onto the line whose normal has angle $\alpha$, if this line is isometrically identified with the real line $\RR$. This identification is used here, as we will only be interested in metric properties (lengths) of projections.  
\item Given $K \subseteq \RR^2$ and an angle $\zeta \in [0, \pi)$, 
we let $K_\zeta$ be the set of all points $\bp\in \RR^2$ such that $K$, seen from $\bp$, appears under an angle at least $\pi-\zeta$. 
More concisely, $\bp \in \RR^2\setminus K_\zeta$ iff there is a convex cone with apex $\bp$ and angle of size $\pi - \zeta$ that contains $K$ in its interior.  
\end{enumerate}

The method developed in \cite{pach} leading to Theorem \ref{thmPach} is based on three lemmas.
The first observation \cite[Lemma 5]{pach} is that in order to improve the bound of Jones, it suffices to find a part $B'\subseteq B$ of the barrier whose contribution to covering $K$ is too small. 
\begin{lemma} \label{lem5}
Let $B$ be a rectifiable barrier of a compact convex set $K$ of perimeter $2p$. If there is a subset $B'\subseteq B$ with
$$\int_{-\pi}^{\pi} \left |B'(\alpha) \cap K(\alpha) \right| \, d\alpha \leq 4 |B'| - 4 \delta$$
then $|B| \geq p+\delta$. 
\end{lemma}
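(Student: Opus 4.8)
The plan is to use the Crofton formula to relate the length of the full barrier $B$ to an integral over directions of the length of its projection, and then to compare this with the analogous quantity for the set $B'$ we are allowed to single out. Concretely, for a rectifiable set $X$ in the plane, the Cauchy--Crofton formula gives
\[
|X| \;=\; \frac{1}{4}\int_{-\pi}^{\pi} \mu_X(\alpha)\, d\alpha,
\]
where $\mu_X(\alpha)$ counts (with multiplicity) the number of intersection points of $X$ with a line of angle-normal $\alpha$, integrated over the position parameter; equivalently $\int \mu_X(\alpha)\,d\alpha$ reduces to $\int |X(\alpha)|\,d\alpha$ when $X$ is a rectifiable curve, since the projection picks up each point of $X$ at least once. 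So the first step is to record the two inequalities
\[
|B| \;\ge\; \frac{1}{4}\int_{-\pi}^{\pi} |B(\alpha)|\, d\alpha
\qquad\text{and, for the barrier part,}\qquad
|B'| \;\ge\; \frac{1}{4}\int_{-\pi}^{\pi} |B'(\alpha)|\, d\alpha .
\]

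The second step is the barrier property itself. Since $B$ is a barrier for $K$, every line meeting $K$ meets $B$; projecting onto the line of normal angle $\alpha$, this says $K(\alpha)\subseteq B(\alpha)$ for every $\alpha$. Hence $|B(\alpha)| \ge |K(\alpha)|$, and integrating and using the Crofton formula for the \emph{boundary} of the convex set $K$ (whose length is the perimeter $2p$, and whose projection in direction $\alpha$ covers $K(\alpha)$ exactly twice) gives $\tfrac14\int |K(\alpha)|\,d\alpha = \tfrac14\cdot 2\cdot(\text{something})$; the clean statement is Jones' bound $\tfrac14\int_{-\pi}^\pi |K(\alpha)|\,d\alpha = p$. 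Now split $B(\alpha)$ into the part coming from $B'$ and the rest: write $B = B' \cup B''$ with $B'' = B\setminus B'$ (or just use $|B(\alpha)| \le |B'(\alpha)| + |B''(\alpha)|$, whichever the authors prefer). The point is to bound the contribution of $B$ to covering $K$ from below by isolating $B'$: $|K(\alpha)| \le |B(\alpha)| \le |B'(\alpha)\cap K(\alpha)| + |B''(\alpha)|$, so that
\[
|K(\alpha)| - |B'(\alpha)\cap K(\alpha)| \;\le\; |B''(\alpha)| .
\]

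The third step assembles these. Integrate the last display over $\alpha\in[-\pi,\pi]$, use $\tfrac14\int|K(\alpha)|\,d\alpha = p$ and the hypothesis $\int_{-\pi}^\pi |B'(\alpha)\cap K(\alpha)|\,d\alpha \le 4|B'| - 4\delta$ to obtain $4p - (4|B'| - 4\delta) \le \int |B''(\alpha)|\,d\alpha \le 4|B''|$, hence $|B''| \ge p - |B'| + \delta$, and finally $|B| = |B'| + |B''| \ge |B'| + (p - |B'| + \delta) = p + \delta$. The main obstacle, and the part needing care rather than cleverness, is the precise form of the Crofton/Cauchy formula for rectifiable (possibly disconnected, possibly self-overlapping) curves: one must be sure the inequalities point the right way — that projections of a rectifiable set have length \emph{at most} (not exactly) the $\tfrac14$-normalized total length, that $K(\alpha) \subseteq B(\alpha)$ genuinely follows from the barrier property for \emph{every} direction, and that replacing $B'(\alpha)$ by $B'(\alpha)\cap K(\alpha)$ in the covering inequality is legitimate because only the part of $B'$ projecting into $K(\alpha)$ can help cover $K(\alpha)$. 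These are exactly the technical points where \cite{pach} invokes the reduction to straight barriers, so I would either cite that reduction or argue the inequalities directly for rectifiable curves via the area formula.
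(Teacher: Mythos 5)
Your proof is correct and follows the same Crofton-formula route that underlies Lemma~5 of \cite{pach}: use $\tfrac14\int_{-\pi}^\pi|X(\alpha)|\,d\alpha\le|X|$ for a rectifiable $X$, $\tfrac14\int_{-\pi}^\pi|K(\alpha)|\,d\alpha=p$ for the convex body, the barrier inclusion $K(\alpha)\subseteq B(\alpha)$, and the split $B=B'\sqcup B''$ to land on $|B''|\ge p-|B'|+\delta$. One small transcription slip: the chain ``$|K(\alpha)|\le|B(\alpha)|\le|B'(\alpha)\cap K(\alpha)|+|B''(\alpha)|$'' is wrong in the middle (nothing controls how much of $B(\alpha)$ sticks outside $K(\alpha)$); the correct chain is $|K(\alpha)|=|K(\alpha)\cap B(\alpha)|\le|K(\alpha)\cap B'(\alpha)|+|B''(\alpha)|$, which is exactly the displayed inequality you then integrate, so the argument goes through unchanged.
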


The authors then describe two different ways in which such waste can occur. The first possibility \cite[Lemma 6]{pach} is that a significant part of the barrier lies \emph{far} outside of $K$:
\begin{lemma} \label{lem6}
Let $K \subseteq \RR^2$ be a compact convex body and $\zeta \in [0, \pi)$. For any rectifiable set $B'\subset \RR^2\setminus K_{\zeta}$, we have
$$\int_{-\pi}^{\pi} \left |B'(\alpha) \cap K(\alpha) \right| \, d\alpha \leq 4 |B'| \cos \tfrac{\zeta}{2}.$$
\end{lemma}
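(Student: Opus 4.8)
The plan is to reduce the estimate, by subadditivity of the left-hand side in $B'$ and additivity of length, to the case where $B'$ is a single line segment $S=\{\bp_0+t\,\bu:0\le t\le\ell\}$, with $\bu$ a unit vector of angle $\theta$, $|S|=\ell$, and $S\subset\RR^2\setminus K_\zeta$. This reduction already covers all later applications, since it suffices to consider straight barriers; for a general rectifiable $B'$ the same computation goes through with the explicit parametrisation below replaced by the area formula for the $1$-Lipschitz projection $\pi_\alpha$ restricted to the $1$-rectifiable set $B'$.

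For a segment the map $\pi_\alpha$ is affine along $S$: from $\pi_\alpha(\bp_0+t\,\bu)=\pi_\alpha(\bp_0)+t\sin(\alpha-\theta)$ one sees that $S(\alpha)$ is an interval of length $\ell\,|\sin(\alpha-\theta)|$ and that $S(\alpha)\cap K(\alpha)$ is the $\pi_\alpha$-image of $\{\bp_0+t\,\bu:0\le t\le\ell,\ \bp_0+t\,\bu\in\Sigma_\alpha\}$, where $\Sigma_\alpha$ denotes the closed strip of all points lying on some line of angle $\alpha$ meeting $K$. Since $\Sigma_\alpha$ is convex, this is the image of a subinterval of $[0,\ell]$, so $|S(\alpha)\cap K(\alpha)|=|\sin(\alpha-\theta)|\int_0^\ell \mathbf{1}[\bp_0+t\,\bu\in\Sigma_\alpha]\,dt$. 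Interchanging the two integrals by Tonelli,
\[
\int_{-\pi}^{\pi}\bigl|S(\alpha)\cap K(\alpha)\bigr|\,d\alpha=\int_0^\ell\!\Bigl(\int_{-\pi}^{\pi}\mathbf{1}[\bp_0+t\,\bu\in\Sigma_\alpha]\,|\sin(\alpha-\theta)|\,d\alpha\Bigr)\,dt,
\]
and it remains to bound the inner integral by $4\cos\tfrac{\zeta}{2}$, uniformly for $\bp:=\bp_0+t\,\bu\in S\subset\RR^2\setminus K_\zeta$.

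Here the geometric input is that $\bp\in\Sigma_\alpha$ exactly when the line of angle $\alpha$ through $\bp$ meets $K$; since $K\subseteq K_\zeta$, the point $\bp$ lies outside $K$ and sees $K$ under a visual angle $\omega<\pi-\zeta$, so by convexity the lines through $\bp$ meeting $K$ are exactly those contained in the closed cone spanned by the two tangent lines from $\bp$ to $K$. Thus the admissible set of $\alpha$ is, modulo $\pi$, an arc of length $\omega$, and inside $[-\pi,\pi)$ it is a union of two such arcs differing by $\pi$; by the $\pi$-periodicity of $\alpha\mapsto|\sin(\alpha-\theta)|$ the two arcs contribute equally, so the inner integral equals $2\int_c^{c+\omega}|\sin\beta|\,d\beta$ for some shift $c$. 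This one-variable integral is maximised when $[c,c+\omega]$ is centred at a peak of $|\sin|$, giving $2\cdot 2\sin\tfrac{\omega}{2}=4\sin\tfrac{\omega}{2}$; and since $\omega\le\pi-\zeta$ and $\sin$ is increasing on $[0,\tfrac{\pi}{2}]$, $\sin\tfrac{\omega}{2}\le\sin\tfrac{\pi-\zeta}{2}=\cos\tfrac{\zeta}{2}$, which completes the estimate.

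The two steps I expect to require the most care are the bookkeeping ones: (i) making the reduction to a segment (or to a single rectifiable curve) rigorous for an arbitrary rectifiable $B'$ --- for this one invokes the area formula $|\pi_\alpha(B'\cap\Sigma_\alpha)|\le\int_{B'\cap\Sigma_\alpha}|\sin(\alpha-\theta(\bp))|\,d\mathcal{H}^1(\bp)$, where $\theta(\bp)$ is the angle of the approximate tangent of $B'$ (defined $\mathcal{H}^1$-a.e.), and one checks the measurability needed for Tonelli; and (ii) the elementary but slightly delicate verification that the lines through an exterior point $\bp$ meeting the convex set $K$ fill exactly an arc of directions of length equal to the visual angle of $K$ at $\bp$, together with the optimisation of $\int_c^{c+\omega}|\sin\beta|\,d\beta$.
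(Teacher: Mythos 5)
The paper does not prove this lemma: it is quoted verbatim from Kawamura, Moriyama, Otachi and Pach \cite{pach} (their Lemma~6), so there is no in-paper argument to compare against. Your proof is correct and follows what is, to my recollection, essentially the source's line of reasoning: reduce by Tonelli (or the area formula for $1$-rectifiable sets) to a pointwise estimate, observe that the line-directions through a point $\bp\notin K_\zeta$ meeting $K$ fill two antipodal arcs of angular measure $\omega<\pi-\zeta$ in $[-\pi,\pi)$, and bound $2\int_c^{c+\omega}|\sin\beta|\,d\beta\le 4\sin\tfrac{\omega}{2}\le 4\cos\tfrac{\zeta}{2}$.
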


As it will be used frequently later on,  we notice the following combination of the last two lemmas.  
\begin{corollary}\label{coro1}
	Let $K \subseteq \RR^2$ be a compact convex body with perimeter $p$, $\zeta \in [0, \pi)$ and let $B$ be a rectifiable barrier for $K$. For any rectifiable set ${B}'\subset B$ outside of $K_{\zeta}$, we have
\[
| B | -p\geq |B'|\big(1-\cos \tfrac{\zeta}{2}\big). 
\]
\end{corollary}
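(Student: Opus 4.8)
The plan is to derive Corollary \ref{coro1} by a direct substitution of the bound from Lemma \ref{lem6} into the hypothesis of Lemma \ref{lem5}, choosing the waste parameter $\delta$ so that the required inequality in Lemma \ref{lem5} becomes an identity. First I would observe that, since $B' \subset B$ lies outside $K_\zeta$, Lemma \ref{lem6} applies verbatim and gives
\[
\int_{-\pi}^{\pi} \left| B'(\alpha) \cap K(\alpha) \right| \, d\alpha \le 4 |B'| \cos\tfrac{\zeta}{2}.
\]
Next, I want to match this with the hypothesis $\int_{-\pi}^{\pi} |B'(\alpha)\cap K(\alpha)|\,d\alpha \le 4|B'| - 4\delta$ of Lemma \ref{lem5}. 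Setting $4|B'|\cos\tfrac{\zeta}{2} = 4|B'| - 4\delta$ and solving yields the natural choice $\delta = |B'|\bigl(1 - \cos\tfrac{\zeta}{2}\bigr)$, which is nonnegative because $\zeta \in [0,\pi)$ forces $\cos\tfrac{\zeta}{2} \in (0,1]$. With this $\delta$, the chain $\int_{-\pi}^{\pi} |B'(\alpha)\cap K(\alpha)|\,d\alpha \le 4|B'|\cos\tfrac{\zeta}{2} = 4|B'| - 4\delta$ shows the hypothesis of Lemma \ref{lem5} is met, so Lemma \ref{lem5} delivers $|B| \ge p + \delta = p + |B'|\bigl(1 - \cos\tfrac{\zeta}{2}\bigr)$, which rearranges to the claimed $|B| - p \ge |B'|\bigl(1 - \cos\tfrac{\zeta}{2}\bigr)$.

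The only genuine subtlety to check is a bookkeeping one: Lemma \ref{lem5} is stated for a set $K$ of perimeter $2p$ and concludes $|B| \ge p + \delta$, whereas Corollary \ref{coro1} is stated for $K$ "with perimeter $p$". So I would either reconcile the notation (interpreting the corollary's $p$ as half the perimeter, matching the rest of the paper's convention where $b(K) \geq p$ means $p$ is the half-perimeter) or, more cleanly, simply apply Lemma \ref{lem5} with its $p$ equal to half the perimeter of $K$ and carry that through. Since no argument actually depends on the numeric value of $p$ here, this is purely a matter of aligning symbols, and there is no real obstacle: the proof is a two-line composition of the two preceding results with the optimal admissible $\delta$.
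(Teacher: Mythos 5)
Your proof is correct and matches the paper's own argument exactly: apply Lemma~\ref{lem6} to bound the Crofton-type integral, rewrite $4|B'|\cos\tfrac{\zeta}{2}$ as $4|B'|-4|B'|(1-\cos\tfrac{\zeta}{2})$, and invoke Lemma~\ref{lem5} with $\delta=|B'|(1-\cos\tfrac{\zeta}{2})$. Your observation about the perimeter bookkeeping is also well taken: the corollary's hypothesis should read ``perimeter $2p$'' to be consistent with Lemma~\ref{lem5} and with the later applications (e.g.\ the unit disc, where $p=\pi$).
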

\begin{proof}
	Let a rectifiable set $B'\subset B$ outside $K_{\zeta}$ be given. According to Lemma \ref{lem6}, the integral in the displayed formula of this lemma is bounded from above by 
	\[
	 4 |B'| \cos \tfrac{\zeta}{2}=
	 4 |B'|-4|B'|\big(1-\cos \tfrac{\zeta}{2}\big).  
	\]
	Lemma \ref{lem5} with $\delta=|B'|\big(1-\cos \frac{\zeta}{2}\big)$ now gives the assertion. 
\end{proof}
The second possibility for a significant waste 
is based on the observation that a hypothetical barrier for $K$ with length equal to half the perimeter would intersect (almost) every line at most once. Hence, pairs of subsets of a general barrier that 
face each other in the following precise sense will lead to multiple 
intersections with lines. The authors in \cite[Lemma 7]{pach} were able to quantify the resulting  waste for straight barriers.
\begin{lemma} \label{lem7}
For an acute angle $\lambda \in (0,\pi/2)$, a positive number $\eta > 0$, and a $\lambda$-separated pair of compact sets $R^-, R^+ \subseteq \RR^2$, there is $\delta=\delta(\lambda, \eta, R^-, R^+)>0$ such that the following holds:
Assume that $B^-\subset R^-$ and $B^+\subset R^+$ satisfy $|B^-|, |B^+| > \eta$ and that they are unions of line segments that make  angles at least $\lambda$ with the horizontal axis. Then
$$\int_{-\pi}^{\pi} \left | (B^-\cup B^+)(\alpha) \right| \, d\alpha \leq 4 |{B}^- \cup {B}^+| - 4 \delta.$$
\end{lemma}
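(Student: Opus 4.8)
\medskip
\noindent\emph{Proposed proof.}
The plan is to estimate from below the \emph{waste}
\[
W \;:=\; 4\,\bigl|B^-\cup B^+\bigr| \;-\; \int_{-\pi}^{\pi}\bigl|(B^-\cup B^+)(\alpha)\bigr|\,d\alpha ,
\]
since the claim is exactly that $W\ge 4\delta$. Writing $B^-$ and $B^+$ as finite unions of segments with disjoint interiors and using, for a single segment $S$ of direction $\phi$, the identity $|S(\alpha)|=|S|\,|\sin(\alpha-\phi)|$ and hence $\int_{-\pi}^{\pi}|S(\alpha)|\,d\alpha=4|S|$, together with $|X(\alpha)|\le\sum_k|S_k(\alpha)|$ for $X=\bigcup_k S_k$, the inclusion--exclusion identity $|(B^-\cup B^+)(\alpha)|=|B^-(\alpha)|+|B^+(\alpha)|-|B^-(\alpha)\cap B^+(\alpha)|$, and $|B^-\cup B^+|=|B^-|+|B^+|$ (valid because $\lambda$-separated sets are disjoint), one gets a decomposition of $W$ into three \emph{non-negative} parts:
\[
W \;=\; W^- + W^+ + W_\times,\qquad
W^\pm := 4|B^\pm|-\!\int_{-\pi}^{\pi}\!|B^\pm(\alpha)|\,d\alpha,\qquad
W_\times := \!\int_{-\pi}^{\pi}\!\bigl|B^-(\alpha)\cap B^+(\alpha)\bigr|\,d\alpha .
\]
Here $W^\pm$ is the ``self-overlap'' of $B^\pm$ (contributed by lines meeting $B^\pm$ more than once) and $W_\times$ equals, up to a universal constant, the measure of the family of lines meeting both $B^-$ and $B^+$. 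It then suffices to bound $W$ below by $4\delta$.

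First I would dispose of two easy matters. Since $|B^\pm(\alpha)|\le\operatorname{diam}(R^\pm)$, one has $W^\pm\ge 4|B^\pm|-2\pi\operatorname{diam}(R^\pm)$, so if $|B^-|$ or $|B^+|$ exceeds $\tfrac{\pi}{2}\operatorname{diam}(R^\pm)+\delta$ the bound $W\ge 4\delta$ is immediate; hence one may assume $|B^-|,|B^+|$ are bounded above by a constant depending only on $R^\pm$. The conceptual heart is then that $W_\times>0$ for \emph{every} admissible pair: if $W_\times=0$, then for every segment $S\subseteq B^-$ and $S'\subseteq B^+$ only a null set of lines can meet both, but two disjoint non-degenerate segments are met by a positive-measure family of lines unless they are collinear, so all segments of $B^-\cup B^+$ would lie on a single line $g$; since $g$ meets both $R^-$ and $R^+$, $\lambda$-separation forces $g$ to make an angle strictly below $\lambda$ with the horizontal, contradicting that the segments on $g$ make angle at least $\lambda$. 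Thus $W\ge W_\times>0$.

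The hard part is to upgrade ``$W>0$'' to the \emph{uniform} estimate ``$W\ge 4\delta$'' with $\delta$ depending only on $\lambda,\eta,R^-,R^+$. I would do this by a compactness argument over the (now length-bounded) family of admissible pairs. The subtlety to be handled carefully is that the obvious compactness --- Hausdorff convergence of the segment-sets $B^\pm$, via the Blaschke selection theorem --- is too weak, since along it neither $|\cdot|$ nor $W_\times$ is lower semicontinuous in the direction one needs, and the limit of a minimizing sequence could degenerate to a configuration with zero waste. The remedy is to use the structure imposed by \emph{small} self-overlap: $W^-$ small means almost every line meets $B^-$ at most once, and for a union of steep segments this is rigid --- two ``independent'' steep pieces (non-collinear, or collinear but each of non-negligible length) always contribute a fixed positive amount either to $W^-$ or, against $B^+$, to $W_\times$ --- so that, after discarding a negligible-length remainder, $B^\pm$ is a union of boundedly many genuine steep segments of length bounded below. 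This reduces the problem to a compact, finite-dimensional set of configurations on which $W$ is continuous and, by the previous paragraph, strictly positive; its minimum $4\delta$ is the constant sought, and it depends only on $\lambda,\eta,R^-,R^+$.

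I expect the main obstacle to be exactly this last step: making precise the ``rigidity of small self-overlap'' and the attendant finite-dimensional reduction, and in particular controlling configurations in which $B^\pm$ hugs one of the separating lines $g^\pm$ --- where a line meeting both $B^-$ and $B^+$ becomes nearly parallel to the adjacent steep segment and the Jacobian of the incidence map $(p,q)\mapsto\overline{pq}$, $p\in B^-$, $q\in B^+$, degenerates. This degeneracy is also the reason the estimate must depend on the full sets $R^\pm$ and not just on their diameters, and why no clean closed-form $\delta$ is available.
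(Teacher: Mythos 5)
Note first that the paper does not prove this lemma: it is quoted verbatim from \cite[Lemma~7]{pach}, and what is actually used in Section~\ref{sec3} is the stronger quantitative version \cite[Lemma~7']{pach}, recalled here inside Corollary~\ref{coro2}, which produces the explicit constant $\delta\ge(\eta\sin\gamma)^2/(2D)$ from a separating-strip configuration. So there is no in-paper proof to compare against; I can only assess your attempt on its own terms and against \cite{pach}.

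Your skeleton is correct but the proof is incomplete, as you concede. The decomposition $W=W^-+W^++W_\times$ with all three summands nonnegative is valid: $\lambda$-separated sets are disjoint, so $|B^-\cup B^+|=|B^-|+|B^+|$, and $\int_{-\pi}^{\pi}|S(\alpha)|\,d\alpha=4|S|$ for a segment $S$. The argument that $W_\times>0$ for every admissible pair is also sound: vanishing cross-overlap forces every segment of $B^-$ to be collinear with every segment of $B^+$, hence all of $B^-\cup B^+$ lies on a single carrier line $g$; and any line through a point of $R^-$ and a point of $R^+$ makes an angle strictly below $\lambda$ with the horizontal (if $g^\pm$ pass through the origin, a point of $R^-$ satisfies $|y|<|x|\tan\lambda$ with $x<0$ and a point of $R^+$ the same with $x>0$, so the connecting slope has absolute value $<\tan\lambda$), contradicting the steepness hypothesis on the segments. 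The genuine gap is the step you label ``the hard part'': upgrading the pointwise fact $W>0$ to the uniform bound $W\ge4\delta$. Here the compactness argument is only an outline; as you yourself observe, Blaschke/Hausdorff limits of the families $B^\pm$ control neither $|B^\pm|$ nor $W_\times$ from below, so a naive minimizing-sequence argument breaks at the start, and your proposed repair --- a ``rigidity of small self-overlap'' reduction to boundedly many long segments --- is exactly the technical heart of the lemma and is asserted, not proved. The route in \cite{pach} is the reverse of yours and sidesteps this entirely: one proves the quantitative Lemma~7' directly, using that a $\lambda$-separated compact pair admits separating strips of some positive width $w$ at angles $\pm(\lambda-\gamma)$, $0<\gamma<\lambda$, and then bounds the waste below in terms of $w$ and a confining radius $D$; the qualitative Lemma~7 is an immediate corollary. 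That approach is both conceptually simpler (no compactness, no finite-dimensional reduction) and indispensable here, since the present paper needs numerical values of $\delta$. It also shows, contrary to your closing guess, that $\delta$ may be taken to depend on $R^\pm$ only through a strip width and a bounding radius, not their full geometry.
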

A quantitative version of the last lemma \cite[Lemma 7']{pach} exploits the fact that $\lambda$-separated sets can be separated by 
strips of positive width, the latter appearing  explicitly in the  constant $\delta$. We state this result, again combined with Lemma \ref{lem5}. 
\begin{corollary}\label{coro2}
		Let  $\eta>0$, $\lambda \in (0,\pi/2)$, $\gamma\in (0,\lambda)$ and $R^-,R^+\subset \RR^2$ be two compact sets contained in a disc of radius $D>0$ and such that they can be separated by strips with angle $\pm(\lambda - \gamma)$ and width $\eta \sin \gamma$. 
		
		If a rectifiable barrier $B$ of $K$ contains two rectifiable sets $B^\pm\subset R^\pm$ of length at least $\eta > 0$ each,
		that are unions of line segments that make  angles at least $\lambda$ with the horizontal axis, 
		 then 
		\[
		| B | -p\geq  \frac{(\eta \sin \gamma)^2}{2D}. 
		\]
\end{corollary}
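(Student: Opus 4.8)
The plan is to imitate the proof of Corollary~\ref{coro1}, applying Lemma~\ref{lem5} to the piece $B':=B^-\cup B^+\subseteq B$, with the quantitative form \cite[Lemma 7']{pach} of Lemma~\ref{lem7} taking over the role that Lemma~\ref{lem6} had in Corollary~\ref{coro1}. Since $R^-$ and $R^+$ lie on opposite sides of a strip of positive width, $B^-$ and $B^+$ are disjoint, so $|B'|=|B^-|+|B^+|$, and since the projection of a union is the union of the projections,
\[
|B'(\alpha)\cap K(\alpha)|\le |B'(\alpha)|=|B^-(\alpha)|+|B^+(\alpha)|-|B^-(\alpha)\cap B^+(\alpha)| .
\]
Writing each $B^\pm$ as a finite union of segments $s_i$ of angle $\theta_i$ with pairwise disjoint relative interiors, and using $\int_{-\pi}^{\pi}|s_i(\alpha)|\,d\alpha=|s_i|\int_{-\pi}^{\pi}|\sin(\alpha-\theta_i)|\,d\alpha=4|s_i|$, one gets $\int_{-\pi}^{\pi}|B^\pm(\alpha)|\,d\alpha\le 4|B^\pm|$, whence
\[
\int_{-\pi}^{\pi}|B'(\alpha)\cap K(\alpha)|\,d\alpha\le 4|B'|-\int_{-\pi}^{\pi}|B^-(\alpha)\cap B^+(\alpha)|\,d\alpha .
\]
By Lemma~\ref{lem5}, applied with $\delta=(\eta\sin\gamma)^2/(2D)$, the corollary follows once one proves the geometric estimate
\begin{equation}\label{eq:coro2waste}
W:=4|B'|-\int_{-\pi}^{\pi}|B'(\alpha)|\,d\alpha\ \ge\ \frac{2(\eta\sin\gamma)^2}{D},
\end{equation}
which is exactly the content of \cite[Lemma 7']{pach}.

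To establish \eqref{eq:coro2waste} I would split the waste into self-overlap and cross-overlap parts,
\[
W=\Big(4|B^-|-\int_{-\pi}^{\pi}|B^-(\alpha)|\,d\alpha\Big)+\Big(4|B^+|-\int_{-\pi}^{\pi}|B^+(\alpha)|\,d\alpha\Big)+\int_{-\pi}^{\pi}|B^-(\alpha)\cap B^+(\alpha)|\,d\alpha ,
\]
all three summands being nonnegative, and normalize by rotating the plane so that the separating strip $S$ of angle $\lambda-\gamma$ and width $\eta\sin\gamma$ becomes horizontal, with $B^-$ below and $B^+$ above $S$; in this frame every segment of $B^\pm$ makes angle at least $\gamma$ with the horizontal. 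The estimate then rests on the following interplay. On the one hand, steep segments of $B^-$ (total length $\ge\eta$) that are confined to a thin slab parallel to $S$ must have heavily overlapping projections orthogonal to $S$, which inflates the first bracket; likewise for $B^+$. On the other hand, steep segments of $B^-$ and $B^+$ that reach well away from $S$ produce, under a slight tilt of the projection direction, whole pencils of parallel lines meeting both $B^-$ and $B^+$, which inflates the last term. The width $\eta\sin\gamma$ of $S$ furnishes a lower bound on the total amount of overlap available, and the radius $D$ of the enclosing disc furnishes an upper bound --- each functional $\alpha\mapsto x\sin\alpha-y\cos\alpha$ has derivative $x\cos\alpha+y\sin\alpha$ of modulus at most $D$ once the disc is centered at the origin --- on how fast projections move with $\alpha$, so that overlap present for one direction $\alpha$ persists over an angle-window of width of order $\eta\sin\gamma/D$. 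Balancing these two effects is what yields the bound $2(\eta\sin\gamma)^2/D$.

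The routine parts are the projection identities and the reduction through Lemma~\ref{lem5}. The real difficulty is the geometric estimate \eqref{eq:coro2waste} --- that is, making \cite[Lemma 7']{pach} explicit --- and within it the ``thin slab'' regime, in which the waste is manufactured by self-overlap of the steep segments rather than by the $B^-$/$B^+$ interaction: this is precisely the configuration that a naive ``project both pieces and compare lengths'' bound misses, and it is why both the strip width $\eta\sin\gamma$ and the diameter $2D$ of the enclosing disc must be carried through the argument.
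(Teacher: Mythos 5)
Your proposal is correct and takes the same route as the paper: reduce to Lemma \ref{lem5} with $\delta=(\eta\sin\gamma)^2/(2D)$, and cite the quantitative waste bound of \cite[Lemma 7']{pach} for the estimate $4|B'|-\int_{-\pi}^{\pi}|B'(\alpha)|\,d\alpha\ge 4\delta$ on $B'=B^-\cup B^+$. The paper treats Lemma 7' as a black box and gives no proof of the corollary beyond this one-line combination; your subsequent heuristic sketch of Lemma 7' is extra material that the paper does not contain and is not needed for the reduction, which is where the actual content lies.
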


The idea of the proof of the lower bound can now be summarized as follows. Consider four different exposed  points $\bx_0, \ldots, \bx_3$ on the boundary of $K$, i.e.~for each $i=0,\ldots,3$ there is a tangent $g_i$  of $K$ touching $K$ exactly in $\bx_i$. 
Consider the union of $g\cap K_\zeta$, where $g$ runs through all lines that hit $K$, are parallel to $g_i$, and have a distance at most $s>0$ from $g_i$. 
Let  $R_i$ contain this set, $i=0,\ldots,3$, see Fig.~\ref{fig:Q} for the case $K=[-1/2,1/2]^2$.
If $\lambda\in(0,\pi)$ and $s>0$ are chosen small enough, any two $R_i$'s are $\lambda$-separated (possibly after a suitable rotation of $K$).

Now any line parallel to $g_i$ hitting $R_i$ must hit $K$ and hence the barrier. 
Thus, the part of the barrier that is hit by those lines has total length at least $s$.   
By construction, the points of this part are in $K_\zeta^C$ (and thus negligible by Corollary \ref{coro1}) or in $R_i$. 
So, for each $i$, we have $|B\cap R_i|\ge 2\eta$ for some $\eta>0$. 
By a combinatorial argument it can be shown that among the parts of the barrier that lie in different $R_i$'s, there are segments of total length $\eta$ that face each other and thus lead to waste by Corollary \ref{coro2}.

We illustrate this general principle first with the centered unit square, $K=[-1/2,1/2]^2$, and apply it to the unit disc $K=U$, for which calculations are actually simpler due to the strong symmetry properties.

\section{Two applications}
\label{sec3}

To be able to apply the method to the two different sets $K$, we write $sA=\{sa:a\in A\}$ for the scaling of $A\subset \RR^2$ with $s\ge 0$, and $$[\ba,\bb]=\{s\ba+(1-s)\bb:0\le s\le 1\}$$ for the line segment in $\RR^2$ with endpoints $\ba, \bb\in \RR^2$. The usual inner product in $\RR^2$  is $\langle\cdot,\cdot\rangle$. We will also  write $\bx^\perp$ for the line orthogonal to a vector $\bx\in \RR^2,\bx\ne \bo$.

\subsection{Revisiting the  unit square}
This subsection illustrates the general approach with the known example of the centered unit square $K=Q=[-1/2,1/2]^2$, simplifying the choice of the sets $R_i$ and improving upon the lower bound in \cite{pach}.  

	The main goal of this subsection  is to show Theorem \ref{thm:main}.(i), and illustrate how the ideas from \cite{pach} work for barriers of the unit cube.  
Following the idea outlined in the previous section, the first step is to determine the set $Q_{\zeta}$ associated with $Q$, and to choose four appropriate points $\bx_0, \ldots, \bx_3$ on the boundary of $Q$.
In a second step, we determine the four sets $R_i$ in accordance with the description at the end of Sect.~\ref{sec2}, and show that they are $\lambda$-separated for a suitable $\lambda>0$. This sets the stage for the application of the two corollaries from Section \ref{sec2}, i.e., for showing that we either have a sufficiently large part $B_{\mathrm{out}}=B\setminus Q_{\zeta}$ of the barrier outside $Q_{\zeta}$, or, alternatively, that a sufficiently large part of the barrier inside the sets $R_i$ contains line segments that intersect too many lines more than once. For didactic reasons, we first describe how minor modifications of the concrete procedure  for $Q$ in Sect.~\ref{sec2} lead to a slight improvement stated in Proposition \ref{prop:pure} below. After that, we refine the analysis employing \cite{steinerberger} to obtain a better lower bound in Theorem \ref{thm:main}.(i). 
We start with a geometric description of the set $Q_\zeta$, see 
	Fig.~\ref{fig:Q}, left. 	
\begin{lemma}
	Let $K=Q=[-1/2,1/2]^2$ and $\zeta \in [0,\pi/2)$ be given. Then the boundary of
	$Q_\zeta$ is the union of four circular arcs of radius  $1/(2\sin\zeta)$ that meet in the vertices of $Q$. 
	The arc connecting $\bx_0$ and $\bx_1$ is part of a circle with midpoint 
	$
	\big(0,(1-\cot\zeta)/2\big)^\top,
	$ 
	and the midpoints of the other arcs are determined by symmetry. 
	
	If $\zeta=\pi/4$, the set $Q_\zeta$ is the circumdisc of $Q$ (the smallest disc containing $Q$). 
\end{lemma}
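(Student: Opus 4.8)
The plan is to characterize $Q_\zeta$ via its complement, using the definition in item~3: a point $\bp$ lies outside $Q_\zeta$ iff $Q$ is contained in the interior of a convex cone with apex $\bp$ and opening angle $\pi-\zeta$. Equivalently, $\bp\in Q_\zeta$ iff $Q$ subtends an angle at least $\pi-\zeta$ at $\bp$. Since $Q$ is a polygon, the angle subtended by $Q$ at an exterior point $\bp$ equals the maximal angle $\angle(\bx_i,\bp,\bx_j)$ over pairs of vertices, and by convexity this maximum is attained at a consecutive pair of vertices as seen from $\bp$. So the boundary $\partial Q_\zeta$ decomposes into the locus of points from which a fixed edge of $Q$ is seen under angle exactly $\pi-\zeta$, together with the locus where a fixed vertex is the relevant ``corner.''

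First I would recall the classical fact (inscribed angle theorem) that the set of points from which a fixed segment $[\ba,\bb]$ is seen under a fixed angle $\theta$ is an arc of a circle through $\ba$ and $\bb$; the radius of that circle is $|\ba-\bb|/(2\sin\theta)$. Applying this with $\theta=\pi-\zeta$ (so $\sin\theta=\sin\zeta$) and $|\ba-\bb|=1$ for each edge of $Q$ gives radius $1/(2\sin\zeta)$, as claimed. For the top edge $[\bx_1,\bx_0]$, the relevant arc bulges upward (away from $Q$, since we want $\bp$ outside $Q$ but still ``close enough'' that the edge subtends a large angle), and its center lies on the vertical symmetry axis $x=0$ at height $h$ determined by the condition that the circle of radius $1/(2\sin\zeta)$ passes through $\bx_0=(1/2,1/2)$: solving $(1/2)^2+(h-1/2)^2=1/(4\sin^2\zeta)$ for the root with $h\le 1/2$ gives $h=\tfrac12 - \tfrac12\sqrt{1/\sin^2\zeta - 1} = \tfrac12(1-\cot\zeta)$, since $\sqrt{\csc^2\zeta-1}=\cot\zeta$ for $\zeta\in(0,\pi/2)$. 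The other three centers follow by the dihedral symmetry of $Q$. I would then check that these four arcs actually fit together: each arc runs between two adjacent vertices of $Q$, and at a vertex $\bx_i$ the two incident arcs meet, with the vertex region handled automatically because at $\bx_i$ the edge subtended is exactly $\pi-\zeta$ on each arc and the ``corner'' contribution degenerates. One should also verify the boundary is a simple closed curve enclosing $Q$, and confirm the restriction $\zeta<\pi/2$ keeps $\sin\zeta$ from vanishing and keeps the geometry of ``bulging outward'' valid (for $\zeta\ge\pi/2$ the set $Q_\zeta$ would be all of $\RR^2$ or behave differently).

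For the final sentence, set $\zeta=\pi/4$. Then $\cot\zeta=1$, so the center of the top arc is $(0,0)^\top$, the origin; by symmetry all four arc centers coincide at the origin, and the common radius is $1/(2\sin(\pi/4))=1/\sqrt2$, which is exactly half the diagonal of $Q$ and hence the circumradius of $Q$. Thus the four arcs lie on a single circle of radius $1/\sqrt2$ centered at the origin, i.e.\ $Q_{\pi/4}$ is the circumdisc of $Q$. This also matches the inscribed-angle picture directly: a diameter of the circumcircle is seen under a right angle $\pi/2=\pi-\zeta$ from any other point of that circle, and $Q$ inscribed in the circle always subtends at least this angle from points inside the disc.

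The main obstacle, and the only genuinely non-routine point, is the combinatorial claim that for a convex polygon the subtended angle at an exterior point is governed by a single edge (so that $\partial Q_\zeta$ is a union of circular arcs, one per edge) and that these arcs close up correctly at the vertices without gaps or overlaps. Once that reduction is in place, the radius and center computations are immediate consequences of the inscribed angle theorem, and the $\zeta=\pi/4$ case is a one-line specialization. I would therefore spend most of the write-up on a clean statement of the ``which edge is seen'' dichotomy and a short argument that adjacent arcs share exactly the vertex between them.
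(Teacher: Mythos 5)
Your proof follows the same route as the paper: reduce to the inscribed angle theorem for each edge of $Q$, read off the radius $1/(2\sin\zeta)$, and solve for the center on the symmetry axis (the paper writes the center as $\bx_1+r(\sin\zeta,-\cos\zeta)^\top$, which simplifies to the same $(0,(1-\cot\zeta)/2)^\top$). The $\zeta=\pi/4$ specialization is also handled identically.

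The one place where your argument has a genuine gap is exactly the step you flag at the end. Your statement that ``by convexity this maximum is attained at a consecutive pair of vertices as seen from $\bp$'' is not true for an arbitrary exterior point: take $\bp$ far outside $Q$ in the direction of a corner, e.g.\ $\bp=(2,2)$; then the two supporting lines from $\bp$ pass through the diagonally opposite vertices $\bx_1$ and $\bx_3$, and the maximal angle $\angle(\bx_i,\bp,\bx_j)$ is attained on a diagonal, not an edge. Convexity alone does not rule this out. What saves the argument is the hypothesis $\zeta<\pi/2$, so on $\partial Q_\zeta$ the visual angle is obtuse. The paper closes the gap in two short sentences: (i) no triangle with a side of $Q$ as base contains $Q$, so an edge can indeed be the extremal pair, and (ii) a triangle erected over a \emph{diagonal} of $Q$ with apex angle $\ge\pi-\zeta>\pi/2$ cannot contain both remaining vertices of $Q$ (the apex lies strictly inside the Thales circle over the diagonal, i.e.\ inside the circumcircle, and the resulting cone from $\bp$ through $\bx_0,\bx_2$ cannot cover $\bx_1$ and $\bx_3$), so a diagonal can never be the extremal pair on $\partial Q_\zeta$. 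You should replace the ``by convexity'' phrase with this obtuse-angle argument; as written, the reduction to one arc per edge is asserted rather than proved, and the assertion is false without using $\zeta<\pi/2$.
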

\begin{proof}
Fix an angle $\zeta \in [0,\pi/2)$. No triangle with one side of $Q$ as base can contain $Q$ completely. Triangles erected over diagonals of $Q$ with an angle at least $\pi-\zeta> \pi/2$ cannot contain any of the other two vertices of  $Q$. 
Hence, if $\bp$ is a point in the boundary of $Q_\zeta$,  there is a side $\bx_i\bx_{i+1}$ of $Q$ such that the triangle $\bx_i\,\bp\,\bx_{i+1}$ has angle  $\varphi=\pi-\zeta$ at $\bp$ (indices are understood modulo $4$ here). The inscribed angle theorem implies that the locus of all points with this property lies on a circle with radius $r=r(\zeta)=1/(2\sin\varphi)=1/(2\sin\zeta)$. This circle contains $\bx_i$ and $\bx_{i+1}$, and its midpoint and $Q$  are on the same side of $\bx_i\bx_{i+1}$. For instance, for $i=0$, the midpoint of this circle is 
\[
\bx_1+r\cdot (\sin\zeta,-\cos\zeta)^\top=
\tfrac12\big(0,1-\cot\zeta\big)^\top,
\] 
as asserted. 
\end{proof}

\begin{figure}[h]
	\begin{center}
		\begin{tikzpicture}[scale=3]  
			\draw[fill=black!15]
		(0.5,0.5) node[anchor=west]{$\bx_0$}
		-- (-0.5,0.5) node[anchor=east]{$\bx_1$}
		-- (-0.5,-0.5) node[anchor=east]{$\bx_2$}
		-- (0.5,-0.5) node[anchor=west]{$\bx_3$}
		-- cycle;
		\draw[red] (0.6,0) node[anchor=west] {$Q_\zeta$};
		\draw (-0.05,0) node[anchor=west] {$Q$};

		\draw[red,domain=340:380,samples=200] plot ({1.46*sin(\x)}, {-.873+1.46*cos(\x)});

			\begin{scope}[rotate around={90:(0,0)}]
				\draw[red,domain=340:380,samples=200] plot ({1.46*sin(\x)}, {-.87+1.46*cos(\x)});		
			\end{scope}
			
			\begin{scope}[rotate around={180:(0,0)}]
				\draw[red,domain=340:380,samples=200] plot ({1.46*sin(\x)}, {-.87+1.46*cos(\x)});		
			\end{scope}
			
			\begin{scope}[rotate around={270:(0,0)}]
				\draw[red,domain=340:380,samples=200] plot ({1.46*sin(\x)}, {-.87+1.46*cos(\x)});		
			\end{scope}
			
		\end{tikzpicture}
		\qquad
		%
		\begin{tikzpicture}[scale=3]  

			\draw[fill=black!15]
			(0.5,0.5) node[anchor=west]{\color{cyan!70!black} $R_0$}
			-- (-0.5,0.5) node[anchor=east]
			 {\color{cyan!70!black} $R_1$}
			-- (-0.5,-0.5) node[anchor=east]
			{\color{cyan!70!black} $R_2$}
			-- (0.5,-0.5) node[anchor=west]
			{\color{cyan!70!black} $R_3$}
			-- cycle;
			\draw[red] (0.6,0) node[anchor=west] {$Q_\zeta$};
			\draw (-0.05,0) node[anchor=west] {$Q$};

			\begin{scope}[rotate around={90:(0,0)}]
				\filldraw[cyan!90!black,domain=340:350,samples=200] 
				plot ({1.46*sin(\x)}, {-.873+1.46*cos(\x)})
				-- (135:0.58)--cycle;
				\filldraw[cyan!90!black,domain=370:380,samples=200] 
				plot ({1.46*sin(\x)}, {-.873+1.46*cos(\x)})
				-- (45:0.58)--cycle;
				\draw[red,domain=340:380,samples=200] plot 			({1.46*sin(\x)}, {-.873+1.46*cos(\x)});
				\draw [blue,<-] (65:0.64) .. controls (66:0.7) and 	(73:.78) .. (70:.8) node[anchor=east]{$\bp_{1-}$};
				
				\draw [blue,<-] (24:0.63) .. controls (26:0.7) and 	(22:.75) .. (27:.8) node[anchor=east]{$\bp_{1+}$};
				
			\end{scope}

			\filldraw[cyan!90!black,domain=340:350,samples=200] 
			plot ({1.46*sin(\x)}, {-.873+1.46*cos(\x)})
			-- (135:0.58)--cycle;
		    	\filldraw[cyan!90!black,domain=370:380,samples=200] 
		    plot ({1.46*sin(\x)}, {-.873+1.46*cos(\x)})
		    -- (45:0.58)--cycle;
			\draw[red,domain=340:380,samples=200] plot ({1.46*sin(\x)}, {-.873+1.46*cos(\x)});
		    \draw [blue,<-] (65:0.64) .. controls (60:0.9) and (58:.8) .. (55:.8) node[anchor=west]{$\bp_{0+}$};
		    
		    \draw [blue,<-] (26:0.65) .. controls (27:0.7) and 
		    (27:.75) .. (26:.8) node[anchor=west]{$\bp_{0-}$};
		    \draw  ({1.46*sin(350)}, {-.873+1.46*cos(350)}) -- 
		    ( {-(-.873+1.46*cos(350))},{-1.46*sin(350)});	
		    \draw (138:0.55) node[anchor=west] {$\ell_1$};

			\begin{scope}[rotate around={180:(0,0)}]
				\filldraw[cyan!90!black,domain=340:350,samples=200] 
			plot ({1.46*sin(\x)}, {-.873+1.46*cos(\x)})
			-- (135:0.58)--cycle;
			\filldraw[cyan!90!black,domain=370:380,samples=200] 
			plot ({1.46*sin(\x)}, {-.873+1.46*cos(\x)})
			-- (45:0.58)--cycle;
			\draw[red,domain=340:380,samples=200] plot 		({1.46*sin(\x)}, {-.873+1.46*cos(\x)});

			\end{scope}
			
			\begin{scope}[rotate around={270:(0,0)}]
								\filldraw[cyan!90!black,domain=340:350,samples=200] 
				plot ({1.46*sin(\x)}, {-.873+1.46*cos(\x)})
				-- (135:0.58)--cycle;
				\filldraw[cyan!90!black,domain=370:380,samples=200] 
				plot ({1.46*sin(\x)}, {-.873+1.46*cos(\x)})
				-- (45:0.58)--cycle;
				\draw[red,domain=340:380,samples=200] plot 		({1.46*sin(\x)}, {-.873+1.46*cos(\x)});
				\draw  ({1.46*sin(350)}, {-.873+1.46*cos(350)}) -- 
				( {-(-.873+1.46*cos(350))},{-1.46*sin(350)});	
				
				\draw (130:0.58) node[anchor=east] {$\ell_0$};
			\end{scope}

			\draw[very thin, dashed]
			(0.5,0.5) -- (-0.5,0.5) -- (-0.5,-0.5)
			-- (0.5,-0.5) -- cycle;
			
		\end{tikzpicture}
	\end{center}
	
	\caption{\label{fig:Q} Left: The centered unit square, $Q$ and the associated set $Q_\zeta$ for $\zeta=\pi/9$. Right: the sets $R_i$ with line segments $\ell_i$ in their boundaries. }
\end{figure}

The following proposition results from an application of the method developed in \cite{pach} with minor improvements. Clearly, the minimal deviation from Jones' bound in Theorem \ref{thm:main}.(i) is stronger, and Proposition \ref{prop:pure} and its proof are only stated to recall the steps in \cite{pach} and the adjustments suggested by us. 
\begin{proposition} \label{prop:pure}
	If  $B$ is a rectifiable 	barrier of the unit square $Q\subset\RR^2$, we must have 
	\[
	|B|>   2 + 2.3 \cdot 10^{-5}.
	\]
\end{proposition}

To outline the proof of Proposition \ref{prop:pure}, we
denote the vertices of $Q$ as in \cite{pach} by
\[
\bx_0=\tfrac1{2}(1,1)^\top,\quad
\bx_1=\tfrac1{2}(-1,1)^\top,\quad
\bx_2=\tfrac1{2}(-1,-1)^\top,\quad
\bx_3=\tfrac1{2}(1,-1)^\top 
\]
and choose $1/\sqrt2> t>1-1/(2\sqrt2)$. Let  $g_i=\bx_i+\bx_i^\perp$ be the line trough $\bx_i$, orthogonal to the line connecting the origin $\bo$ and $\bx_i$, and define 
\[
R_i=\{\bx\in Q_\zeta:t\le {\sqrt 2}\langle \bx,\bx_i\rangle \le \tfrac1{\sqrt2}\}
\] 
$i=0,\ldots,3$. The sets $R_i$ are slightly smaller than those defined on \cite[p.~18]{pach} and (given $g_i$  and the width $1/\sqrt2-t$) the smallest possible for the method to work. We define the line segments $\ell_i= \{\bx\in Q_\zeta:t= {\sqrt 2}\langle \bx,\bx_i\rangle\}$ and denote its endpoints by $\bp_{i+}$ and $\bp_{i-}$, see Fig.~\ref{fig:Q}.
The coordinates of these points can be obtained intersecting the line 
containing $\ell_i$ with the appropriate circular arc in the boundary of $Q_\zeta$. For $i=1$ we get $\bp_{1-}=(x,y)$ with 
\[
x=\tfrac{1}{2}\Big(c-\sqrt2 t+\sqrt{2r^2-(c-\sqrt2 t)^2}\,\Big),
\quad y=\sqrt2t+x,
\]
with $c=(1-\cot\zeta)/2$ and $r=1/(2\sin\zeta)$. 
The other points $\bp_{i\pm}$ can then be found using symmetry arguments. 

One can now redo essentially the proof in \cite[Theorem 2]{pach} with $\zeta=0.15$
and $\lambda=\pi/8$  considering two cases. If $|B_{\mathrm{out}}| > 0.008182$, 
Corollary 1 with  $B'=B_{\mathrm {out}}$ implies directly the claim $|B|-2>2.3\cdot 10^{-5}$, implying Proposition \ref{prop:pure} in this case. 
	
If $|B_{\mathrm{out}}|=|B\setminus Q_{\zeta}| \le  0.008182$,  one may proceed as in the proof of  \cite[Theorem 2]{pach}.

\begin{figure}[h]
	\begin{center}
		\begin{tikzpicture}[scale=3]  
			\draw[fill=black!15]
			(0.5,0.5) node[anchor=west]{\color{cyan!70!black} $R_0$}
			-- (-0.5,0.5) node[anchor=east]
			{\color{cyan!70!black} $R_1$}
			-- (-0.5,-0.5) node[anchor=east]
			{\color{cyan!70!black} $R_2$}
			-- (0.5,-0.5) node[anchor=west]
			{\color{cyan!70!black} $R_3$}
			-- cycle;
			\draw (-0.05,0) node[anchor=west] {$Q$};
			
			\begin{scope}[rotate around={90:(0,0)}]
				\filldraw[cyan!90!black,domain=340:350,samples=200] 
				plot ({1.46*sin(\x)}, {-.873+1.46*cos(\x)})
				-- (135:0.58)--cycle;
				\filldraw[cyan!90!black,domain=370:380,samples=200] 
				plot ({1.46*sin(\x)}, {-.873+1.46*cos(\x)})
				-- (45:0.58)--cycle;
				\draw[gray,domain=340:380,samples=200] plot 			({1.46*sin(\x)}, {-.873+1.46*cos(\x)});
			\end{scope}

			\filldraw[cyan!90!black,domain=340:350,samples=200] 
			plot ({1.46*sin(\x)}, {-.873+1.46*cos(\x)})
			-- (135:0.58)--cycle;
			\filldraw[cyan!90!black,domain=370:380,samples=200] 
			plot ({1.46*sin(\x)}, {-.873+1.46*cos(\x)})
			-- (45:0.58)--cycle;
			\draw[gray,domain=340:380,samples=200] plot ({1.46*sin(\x)}, {-.873+1.46*cos(\x)});

			\begin{scope}[rotate around={180:(0,0)}]
				\filldraw[cyan!90!black,domain=340:350,samples=200] 
				plot ({1.46*sin(\x)}, {-.873+1.46*cos(\x)})
				-- (135:0.58)--cycle;
				\filldraw[cyan!90!black,domain=370:380,samples=200] 
				plot ({1.46*sin(\x)}, {-.873+1.46*cos(\x)})
				-- (45:0.58)--cycle;
				\draw[gray,domain=340:380,samples=200] plot 		({1.46*sin(\x)}, {-.873+1.46*cos(\x)});		
			\end{scope}
			
			\begin{scope}[rotate around={270:(0,0)}]
				\filldraw[cyan!90!black,domain=340:350,samples=200] 
				plot ({1.46*sin(\x)}, {-.873+1.46*cos(\x)})
				-- (135:0.58)--cycle;
				\filldraw[cyan!90!black,domain=370:380,samples=200] 
				plot ({1.46*sin(\x)}, {-.873+1.46*cos(\x)})
				-- (45:0.58)--cycle;
				\draw[gray,domain=340:380,samples=200] plot 		({1.46*sin(\x)}, {-.873+1.46*cos(\x)});
				\draw  ({1.46*sin(350)}, {-.873+1.46*cos(350)}) -- 
				( {-(-.873+1.46*cos(350))},{-1.46*sin(350)});	
			\end{scope}
			
			\draw[very thin, dashed]
			(0.5,0.5) -- (-0.5,0.5) -- (-0.5,-0.5)
			-- (0.5,-0.5) -- cycle;
			
			\draw[thick] (0.45,0.47)--(0.25,0.47);
			\draw[thick] (0.47,0.45)--(0.47,0.25);
			\draw[red,thick] (0.455,0.455)--(0.32,0.32);
			\begin{scope}[rotate around={90:(0,0)}]
				\draw[red,thick] (0.45,0.47)--(0.25,0.47);
				\draw[thick] (0.47,0.45)--(0.47,0.25);
				\draw[thick] (0.455,0.455)--(0.32,0.32);
			\end{scope}
			\begin{scope}[rotate around={180:(0,0)}]
				\draw[red,thick] (0.45,0.47)--(0.25,0.47);
				\draw[thick] (0.47,0.45)--(0.47,0.25);
				\draw[thick] (0.455,0.455)--(0.32,0.32);
			\end{scope}
			\begin{scope}[rotate around={270:(0,0)}]
				\draw[thick] (0.45,0.47)--(0.25,0.47);
				\draw[thick] (0.47,0.45)--(0.47,0.25);
				\draw[red,thick] (0.455,0.455)--(0.32,0.32);
			\end{scope}
			
		\end{tikzpicture}
		\qquad
		\begin{tikzpicture}[scale=3]  
			\draw[fill=black!15]
			(0.5,0.5) node[anchor=west]{\color{cyan!70!black} $R_0$}
			-- (-0.5,0.5) node[anchor=east]
			{\color{cyan!70!black} $R_1$}
			-- (-0.5,-0.5) node[anchor=east]
			{\color{cyan!70!black} $R_2$}
			-- (0.5,-0.5) node[anchor=west]
			{\color{cyan!70!black} $R_3$}
			-- cycle;
			\draw (-0.05,0) node[anchor=west] {$Q$};
			
			\begin{scope}[rotate around={90:(0,0)}]
				\filldraw[cyan!90!black,domain=340:350,samples=200] 
				plot ({1.46*sin(\x)}, {-.873+1.46*cos(\x)})
				-- (135:0.58)--cycle;
				\filldraw[cyan!90!black,domain=370:380,samples=200] 
				plot ({1.46*sin(\x)}, {-.873+1.46*cos(\x)})
				-- (45:0.58)--cycle;
				\draw[gray,domain=340:380,samples=200] plot 			({1.46*sin(\x)}, {-.873+1.46*cos(\x)});
			\end{scope}

			\filldraw[cyan!90!black,domain=340:350,samples=200] 
			plot ({1.46*sin(\x)}, {-.873+1.46*cos(\x)})
			-- (135:0.58)--cycle;
			\filldraw[cyan!90!black,domain=370:380,samples=200] 
			plot ({1.46*sin(\x)}, {-.873+1.46*cos(\x)})
			-- (45:0.58)--cycle;
			\draw[gray,domain=340:380,samples=200] plot ({1.46*sin(\x)}, {-.873+1.46*cos(\x)});

			\begin{scope}[rotate around={180:(0,0)}]
				\filldraw[cyan!90!black,domain=340:350,samples=200] 
				plot ({1.46*sin(\x)}, {-.873+1.46*cos(\x)})
				-- (135:0.58)--cycle;
				\filldraw[cyan!90!black,domain=370:380,samples=200] 
				plot ({1.46*sin(\x)}, {-.873+1.46*cos(\x)})
				-- (45:0.58)--cycle;
				\draw[gray,domain=340:380,samples=200] plot 		({1.46*sin(\x)}, {-.873+1.46*cos(\x)});		
			\end{scope}
			
			\begin{scope}[rotate around={270:(0,0)}]
				\filldraw[cyan!90!black,domain=340:350,samples=200] 
				plot ({1.46*sin(\x)}, {-.873+1.46*cos(\x)})
				-- (135:0.58)--cycle;
				\filldraw[cyan!90!black,domain=370:380,samples=200] 
				plot ({1.46*sin(\x)}, {-.873+1.46*cos(\x)})
				-- (45:0.58)--cycle;
				\draw[gray,domain=340:380,samples=200] plot 		({1.46*sin(\x)}, {-.873+1.46*cos(\x)});
				\draw  ({1.46*sin(350)}, {-.873+1.46*cos(350)}) -- 
				( {-(-.873+1.46*cos(350))},{-1.46*sin(350)});	
			\end{scope}
			
			\draw[very thin, dashed]
			(0.5,0.5) -- (-0.5,0.5) -- (-0.5,-0.5)
			-- (0.5,-0.5) -- cycle;
			
			\draw[red,thick] (0.45,0.47)--(0.25,0.47);
			\draw[thick] (0.47,0.45)--(0.47,0.25);
			\draw[thick] (0.455,0.455)--(0.32,0.32);
			\begin{scope}[rotate around={90:(0,0)}]
				\draw[red,thick] (0.45,0.47)--(0.25,0.47);
				\draw[thick] (0.47,0.45)--(0.47,0.25);
				\draw[thick] (0.455,0.455)--(0.32,0.32);
			\end{scope}
			\begin{scope}[rotate around={180:(0,0)}]
				\draw[red,thick] (0.45,0.47)--(0.25,0.47);
				\draw[thick] (0.47,0.45)--(0.47,0.25);
				\draw[thick] (0.455,0.455)--(0.32,0.32);
			\end{scope}
			\begin{scope}[rotate around={270:(0,0)}]
				\draw[red,thick] (0.45,0.47)--(0.25,0.47);
				\draw[thick] (0.47,0.45)--(0.47,0.25);
				\draw[thick] (0.455,0.455)--(0.32,0.32);
			\end{scope}
			
		\end{tikzpicture}
	\end{center}
	\caption{\label{fig:newQ} Symbolic display of each of the sets $B_{i,j}$ by one line segment with representative orientation.	The red line segments represent sets $B_{i,j}$ with $|B_{i,j}|>\eta$.
	In order to apply Corollary \ref{coro2}, we aim for a pair $(i_0,j_0)$ with $i_0<j_0$ such that $B_{i_0,j_0}$ and $B_{j_0,i_0}$ are represented by black line segments.\\
		Left:  Corollary \ref{coro2} can only be applied to the pairs of barrier subsets in \emph{neighboring} $R_i$'s: choose either subsets of $(B_0,B_1)$ or $(B_0,B_3)$. 
		Right: Corollary \ref{coro2} can only be applied to the pairs of barrier subsets in \emph{opposing} $R_i$'s: choose either subsets of $(B_0,B_2)$ or $(B_1,B_3)$. 
	}
\end{figure}

For the reader's convenience, we outline the main arguments from  that paper in the following. 
	Define $B_i=B\cap R_i$, $i=0,\ldots,3$.
	The union of all lines  $g$ parallel to $\ell_i$ hitting $Q\cap R_i$ is a strip of width $1/\sqrt2-t$. Any line in this strip must hit $Q$ and thus the barrier in $B_i\cup B_{\mathrm{out}}$, implying $|B_i\cup B_{\mathrm{out}}|\ge 1/\sqrt2-t$. Since both sets are disjoint, we have $|B_i|\ge 2\eta$, $i=0,1,2,3$, with 
	\begin{equation}\label{eq:etaoutQ}
		\eta=\tfrac{1}2\big(\tfrac1{\sqrt2}-t-|B_{\mathrm{out}}|\big).
	\end{equation}
	In \cite{pach}, each of the sets $B_i$ is paritioned  into three subsets $B_{i,j}$,
	$i=0,1,2,3$, $j\in \{0,1,2,3\}\setminus \{i\}$. The set $B_{i,j}$ is the union of all line segments in $B_i$ that point approximately  in direction of the set $R_j$ (i.e.~with an angle deviating at most $\lambda=\pi/8$ from that direction). For instance, $B_{0,1}$ contains the union of all line segments in $B_0$ with a slope 
	deviating at most by $\pi/8$ from the horizontal direction. 
	(We remark the notational difference that the authors of \cite{pach} work with \emph{sets} of line segments while we work with their union, so  our sets $B_{i,j}$ are subsets of $\mathbb R^2$; for details, also the formal definition of the sets $B_{i,j}$, we refer to \cite[p.~18]{pach}.) Fig.~\ref{fig:newQ} illustrates this by representing each of the sets  $B_{i,j}$ by \emph{one} line segment at $R_i$ pointing towards the corresponding set $R_j$. The purpose of this figure is to support intuition, it does \emph{not} actually draw the sets $B_{i,j}$. The latter sets might consist of many line segments and are contained in one of the $R_i$'s. 
	If one wants to apply Corollary \ref{coro2} to (rotations of) $R_i$ and $R_j$, one must ensure that $|B_{i,j}|$ and $|B_{j,i}|$ are not too large, as otherwise $B_i$ and $B_j$ consist predominantly of roughly parallel line segments. This is achieved in \cite{pach} by a combinatorial argument: Since $|B_i|\ge 2\eta$, there is at most one  $j\ne i$ with $|B_{i,j}|>\eta$.  Hence, among the six possible pairs $(i,j)$, $i<j$, there are at most four such that 
	either $|B_{i,j}|>\eta$ or $|B_{j,i}|>\eta$. The remaining pairs $(i_0,j_0)$ satisfy $|B_{i_0}\setminus B_{i_0,j_0}|> \eta$ and  $|B_{j_0}\setminus B_{j_0,i_0}|> \eta$, and  Corollary \ref{coro2} can be applied to them. We let $B^+$ and $B^-$ be appropriate rotations of the sets $B_{i_0}\setminus B_{i_0,j_0}$ and $B_{j_0}\setminus B_{j_0,i_0}$, respectively. An application of Corollary \ref{coro2} to the sets $R^+$ and $R^-$ (obtained 
	by the same rotation of $R_{i_0}$ and $R_{j_0}$ about the origin) and subsets $B^+$ and $B^-$ of barriers with total lengths at least $\eta$ in them.  

However, we apply three changes to improve the above procedure. Firstly, slightly smaller sets $R_i$ are used. 
Secondly, we use the parameters $(\zeta,t)=(0.15,0.615)$ (instead of the tuple $(0.1,0.619)$ from \cite[p.~18]{pach}), where the width of their $R_i$ was $\sqrt2/16$ corresponding to our $1/\sqrt2-t$) and get $\eta = 
(1/\sqrt 2-0.615 - 0.008182)/2 \approx 0.0419$.
Thirdly, when separating  pairs of neighboring sets $R_i$, for instance $R_0$ and $R_1$, say,  we use the fact that they are contained in a disc centered at $(\bx_0+\bx_1)/2$ of radius $0.522$
and thus use $D=1.044$ instead of the value $\sqrt{2}\frac{41}{40} \approx 1.45$ in \cite[p.~18]{pach}.
Corollary \ref{coro2} with angle $\gamma=0.19$
gives the claim $|B|-2\ge 3\cdot 10^{-5}$ in this case.
However, when separating \emph{opposing} sets $R_i$, 
	 the above  value of  $D$ is too small, so we use 
	 $D=1.415<\sqrt2$ since $Q_\zeta$ is contained in the circumdisc of $Q$.  Applied to the rotations $R^-,R^+$ of two opposing sets $R_i$, the assumptions of Corollary \ref{coro2} turn out to hold with $\gamma=0.194$ and yield again $|B|-2> 2.3\cdot 10^{-5}$. The assertion of Proposition \ref{prop:pure} is thus shown. \medskip 
	 
	 We now explain how the above arguments can be adjusted based on a recent development to obtain the better bound of Theorem \ref{thm:main}.(i). With the same parameters $(\zeta,t,\lambda)=(0.15,0.615,\pi/8)$ as before, consider two cases. If $|B_{\mathrm{out}}| >0.022410$, 
	 Corollary 1 with  $B'=B_{\mathrm {out}}$ implies directly the claim $|B|-2>6.3\cdot 10^{-5}$, implying the claim in Theorem \ref{thm:main}.(i) in this case.
	 Assume from now on that $|B_{\mathrm{out}}|\le  0.022410$ holds.
	 
	 Steinerberger introduced an `angular orientation measure' $\mu_L$ of finite unions $L\subset \mathbb R^2$ of line segments. For a segment $s$ of length $a$ enclosing an angle $\alpha\in[0,\pi)$ with the horizontal axis, this measure is defined as  $\mu_s=\frac12(\delta_\alpha+\delta_{\alpha+\pi})$. Here, $\delta_\alpha$ is the usual Dirac probability measure
	 supported by $\{\alpha\}$.  If $L$ is the union of the line segments $s_1,\ldots,s_k$ whose pairwise  intersections either are empty or singletons, he defines $\mu_L=\sum_{i=1}^k\mu_{s_i}$. The main result of  	
	 \cite{steinerberger} states that for  a straight barrier $B$ of a convex polygon $P$ (even a compact convex set in $\mathbb R^2$) with a length close to Jones' bound, $\mu_B$ and $\frac12\mu_{\partial P}$ are close in the norm of an appropriate homogeneous Sobolev space. This is quantified in \cite[Theorem]{steinerberger} and applied to the unit square $P=Q$, where it implies that the line segments constituting such a barrier must typically be (almost) vertical or horizontal. More specific,	\cite[Proposition]{steinerberger} states that if $B$ is a straight barrier for $Q$ with $|B|\le 2+\delta$, and $0\le \Lambda\le \pi/4$ is given, then 
	 \begin{equation}
	 	\label{eq:steinerberger}
	 	\mu_{B}(A_\Lambda)\le \frac{\delta}{1-\cos\Lambda},
	 \end{equation}
	 where $$A_\Lambda=[0,2\pi)\setminus\bigcup_{j=0}^3 \Big[\frac j2\pi-\Lambda, \frac j2\pi+\Lambda\Big]$$ is the set af all angles deviating at most by $\Lambda$ from the horizontal ($\alpha\in \{0,\pi\}$) or the vertical ($\alpha\in\{\pi/2,3\pi/2\}$) directions.  Hence, the total length of all line segments in $B$ with directions in $A_\Lambda$ is at most $\delta/(1-\cos\Lambda)$ in this case.

	 \begin{figure}[h]
	 	\begin{center}
	 		\begin{tikzpicture}[scale=3]  
	 			\draw[fill=black!15]
	 			(0.5,0.5) node[anchor=west]{\color{cyan!70!black} $R_0$}
	 			-- (-0.5,0.5) node[anchor=east]
	 			{\color{cyan!70!black} $R_1$}
	 			-- (-0.5,-0.5) node[anchor=east]
	 			{\color{cyan!70!black} $R_2$}
	 			-- (0.5,-0.5) node[anchor=west]
	 			{\color{cyan!70!black} $R_3$}
	 			-- cycle;
	 			\draw (-0.05,0) node[anchor=west] {$Q$};
	 			
	 			\begin{scope}[rotate around={90:(0,0)}]
	 				\filldraw[cyan!90!black,domain=340:350,samples=200] 
	 				plot ({1.46*sin(\x)}, {-.873+1.46*cos(\x)})
	 				-- (135:0.58)--cycle;
	 				\filldraw[cyan!90!black,domain=370:380,samples=200] 
	 				plot ({1.46*sin(\x)}, {-.873+1.46*cos(\x)})
	 				-- (45:0.58)--cycle;
	 				\draw[gray,domain=340:380,samples=200] plot 			({1.46*sin(\x)}, {-.873+1.46*cos(\x)});
	 			\end{scope}

	 			\filldraw[cyan!90!black,domain=340:350,samples=200] 
	 			plot ({1.46*sin(\x)}, {-.873+1.46*cos(\x)})
	 			-- (135:0.58)--cycle;
	 			\filldraw[cyan!90!black,domain=370:380,samples=200] 
	 			plot ({1.46*sin(\x)}, {-.873+1.46*cos(\x)})
	 			-- (45:0.58)--cycle;
	 			\draw[gray,domain=340:380,samples=200] plot ({1.46*sin(\x)}, {-.873+1.46*cos(\x)});

	 			\begin{scope}[rotate around={180:(0,0)}]
	 				\filldraw[cyan!90!black,domain=340:350,samples=200] 
	 				plot ({1.46*sin(\x)}, {-.873+1.46*cos(\x)})
	 				-- (135:0.58)--cycle;
	 				\filldraw[cyan!90!black,domain=370:380,samples=200] 
	 				plot ({1.46*sin(\x)}, {-.873+1.46*cos(\x)})
	 				-- (45:0.58)--cycle;
	 				\draw[gray,domain=340:380,samples=200] plot 		({1.46*sin(\x)}, {-.873+1.46*cos(\x)});		
	 			\end{scope}
	 			
	 			\begin{scope}[rotate around={270:(0,0)}]
	 				\filldraw[cyan!90!black,domain=340:350,samples=200] 
	 				plot ({1.46*sin(\x)}, {-.873+1.46*cos(\x)})
	 				-- (135:0.58)--cycle;
	 				\filldraw[cyan!90!black,domain=370:380,samples=200] 
	 				plot ({1.46*sin(\x)}, {-.873+1.46*cos(\x)})
	 				-- (45:0.58)--cycle;
	 				\draw[gray,domain=340:380,samples=200] plot 		({1.46*sin(\x)}, {-.873+1.46*cos(\x)});
	 				\draw  ({1.46*sin(350)}, {-.873+1.46*cos(350)}) -- 
	 				( {-(-.873+1.46*cos(350))},{-1.46*sin(350)});	
	 			\end{scope}
	 			
	 			\draw[very thin, dashed]
	 			(0.5,0.5) -- (-0.5,0.5) -- (-0.5,-0.5)
	 			-- (0.5,-0.5) -- cycle;
	 			
	 			\draw[thick] (0.45,0.47)--(0.25,0.47);
	 			\draw[thick] (0.47,0.45)--(0.47,0.25);
	 			\begin{scope}[rotate around={90:(0,0)}]
	 				\draw[thick] (0.45,0.47)--(0.25,0.47);
	 				\draw[thick] (0.47,0.45)--(0.47,0.25);
	 				\draw[green!80!black,thick] (0.455,0.455)--(0.32,0.32);
	 			\end{scope}
	 			\begin{scope}[rotate around={180:(0,0)}]
	 				\draw[thick] (0.45,0.47)--(0.25,0.47);
	 				\draw[thick] (0.47,0.45)--(0.47,0.25);
	 			\end{scope}
	 			\begin{scope}[rotate around={270:(0,0)}]
	 				\draw[thick] (0.45,0.47)--(0.25,0.47);
	 				\draw[thick] (0.47,0.45)--(0.47,0.25);
	 				\draw[green!80!black,thick] (0.455,0.455)--(0.32,0.32);
	 			\end{scope}
	 			
	 		\end{tikzpicture}	
	 	\end{center}
	 	\caption{\label{fig:newSteinerb} Symbolic display of each of the sets $B_{i,j}$ by one line segment with representative orientation, where the barriers  $B_{0,2}$ and $B_{2,0}$ with directions almost coinciding with the main diagonal have been neglected due to an application of  \cite[Proposition]{steinerberger}, see main text. Actually, also the barrier subsets represented by the green directions have a small length, namely at most $2\theta$ each, but this is not needed in the proof.
	 	}
	 \end{figure}
	 
	 We apply this to the barrier of the unit square in order to control the lengths of some of the sets  $B_{i,j}$ from above. 
	 We get bounds in the case where $i$ and $j$ have the same parity, corresponding to opposing sets $B_i$ and $B_j$, i.e.~where line segments are roughly parallel to one of the diagonals of $Q$. If $|B|> 2+\delta$ with $\delta=6.3  \cdot 10^{-5}$, the statement in Theorem \ref{thm:main}.(i) holds, so we may assume $|B|\le 2 + \delta$. 
	 The bound \eqref{eq:steinerberger} with $\Lambda=\lambda=\pi/8$ 
	 implies 
	 \[
	 \sum_{i\ne j, i+j\text{ even}} |B_{i,j}|\le \frac{\delta}{1-\cos\lambda}\le 8.28\cdot 10^{-4}=:2\theta. 
	 \]
	 Thus, either $\theta\ge |B_{0,2}|+|B_{2,0}|$ or $\theta \ge |B_{1,3}|+|B_{1,3}|$. Without loss of generality, assume that 
	 \[
	 \theta \ge |B_{0,2}|+|B_{2,0}|\ge \max\{|B_{0,2}|,|B_{2,0}|\},
	 \]
	  implying 
	 \[
	   |B_{0}\setminus B_{0,2}|> \eta' \text{ and }  
	   |B_{2}\setminus B_{2,0}|> \eta'.
	 \]
	 Here, $\eta'= 2\eta-\theta=0.06969-4.14\cdot 10^{-4}\ge0.06928$, where \eqref{eq:etaoutQ}, $|B_{\mathrm{out}}|\le  0.022410$ and $\theta=4.14\cdot 10^{-4}$ were used. The geometric configuration is depicted in Fig.~\ref{fig:newSteinerb}. As the sets $R_0$ and $R_2$ haven't changed, Corollary \ref{coro2} can be applied as before, but with the new $\eta'$ instead of $\eta$, yielding 
	 \[
	 |B|-2\ge \frac{(\eta'\sin(\gamma))^2}{2D}\ge 6.3\cdot 10^{-5}, 
	 \]
	 where $\gamma=0.194$ and $D=1.415$, just like in the proof of Proposition \ref{prop:pure} for opposing sets $R_i$. This concludes the proof of Theorem \ref{thm:main}.(i).

 \subsection{The unit disc} 
Let $U \subset \RR^2$ be the unit disc centered at the origin $\bo$. 
Recall that $U_\zeta$ is the set of all points that are not  $\zeta$-far from $U$, where  $\zeta\in [0,\pi)$. In our case, any  point $\bp$ on the boundary of $U_{\zeta}$ is an apex point of a suitable  convex cone $C_\bp$ of angle  $\pi - \zeta$ at the apex point, such that $U\subset C_\bp$ hits both boundary rays of $C_\bp$.

Following again the general proof idea, the first step is to explore the set $U_{\zeta}$ associated with $U$: due to the strong symmetry properties of $U$, the set $U_{\zeta}$ is a disc and we establish a relation between its radius and the angle $\zeta$.
Next, we choose four appropriate points $\bx_0, \ldots, \bx_3$ on the boundary of $U$
and  determine the four sets $R_i$ in accordance with the description at the end of Sect.~\ref{sec2}. The relatively simple relations between the free parameters now allow us to optimize them  in order to maximize the gain obtained from the application of the two corollaries in the final step.\medskip 

We start with a description of $U_\zeta$.

\begin{proposition}\label{prop:Uzeta}
	Let $U \subset \RR^2$ be the unit disc and $0\le \zeta<\pi$. Then $U_\zeta$ is a disc containing $U$, and if $ U_\zeta=(1+r)U$, $r\ge 0$, we have 
	\begin{equation}
		\label{eq:zeta_r}
		\zeta=2\arccos\Big(\frac1{1+r}\Big).
	\end{equation}
\end{proposition}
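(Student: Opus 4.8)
The plan is to use the rotational symmetry of $U$ to reduce the statement to a one–dimensional computation. Since $U$ is invariant under every rotation about $\bo$, and the defining property of a point of $U_\zeta$ involves only that point and $U$, the set $U_\zeta$ is invariant under these rotations; hence it is a union of circles centred at $\bo$, and it suffices to decide, for each $d\ge 0$, whether a point $\bp$ with $|\bp|=d$ lies in $U_\zeta$. First I would record that every $\bp$ with $|\bp|\le 1$ lies in $U_\zeta$: from an interior point $U$ is seen under the full angle $2\pi$, and from a boundary point under the angle $\pi$ (the tangent line there being the relevant supporting line of the minimal cone), and both are $\ge \pi-\zeta$ because $\zeta\ge 0$. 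In particular $U\subseteq U_\zeta$.

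Next I would treat a point $\bp$ with $d=|\bp|>1$. The two tangent lines from $\bp$ to $U$ bound the smallest convex cone with apex $\bp$ containing $U$, and using that each radius to a tangency point is perpendicular to the corresponding tangent, a single right triangle with hypotenuse $d$ and opposite leg $1$ shows these tangents meet at angle $2\arcsin(1/d)$; this is exactly the angle under which $U$ is seen from $\bp$. By the definition of $U_\zeta$, therefore, $\bp\in U_\zeta$ precisely when $2\arcsin(1/d)\ge\pi-\zeta$. Since $\arcsin$ is increasing and $\sin\big(\tfrac{\pi-\zeta}{2}\big)=\cos\tfrac{\zeta}{2}$, this rearranges to $d\le 1/\cos\tfrac{\zeta}{2}$. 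Note that $1/\cos\tfrac{\zeta}{2}\ge 1$, which is consistent with the previous paragraph, and that for $\zeta=0$ the condition forces $d\le 1$.

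Combining the two cases gives $U_\zeta=\{\bp\in\RR^2:|\bp|\le 1/\cos\tfrac{\zeta}{2}\}$, which is a disc containing $U$. Writing it as $(1+r)U$ with $r\ge 0$ then yields $1+r=1/\cos\tfrac{\zeta}{2}$, i.e.\ $\cos\tfrac{\zeta}{2}=1/(1+r)$; since $\tfrac{\zeta}{2}\in[0,\pi/2)$ this is equivalent to $\zeta=2\arccos\big(1/(1+r)\big)$, which is \eqref{eq:zeta_r}.

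The one place that needs care is matching the open-cone formulation of the definition with the non-strict inequality used above. I would check that an open cone of angle $\pi-\zeta$ with apex $\bp$ contains $U$ in its interior exactly when $\pi-\zeta>2\arcsin(1/d)$ — the minimal closed cone from $\bp$ touches $U$ along both of its boundary rays, so a strictly wider opening is needed to push $U$ into the interior — so that $\bp\in U_\zeta$, the negation of that condition, corresponds to $2\arcsin(1/d)\ge\pi-\zeta$, as claimed. No other step is expected to be delicate; once the symmetry reduction is in place the argument is entirely elementary.
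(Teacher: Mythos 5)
Your proof is correct and takes essentially the same route as the paper: reduce by rotational symmetry to a point at distance $d$ from the center and compute the visual angle from a tangent-line right triangle. Where the paper passes through a Thales circle and an isosceles triangle to get $\beta=\pi/2-\arccos(1/(1+r))$, you read off $\arcsin(1/d)$ directly, which is the same quantity; you are a bit more explicit about the case $d\le 1$ and about matching the open-cone definition with the non-strict inequality, but the substance is identical.
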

\begin{proof}
Clearly, the rotational symmetry of $U$ implies that $U_\zeta$ is a disc centered at the origin with radius $1+r$ for some  $r\ge 0$. 
To make the relation between $r$ and $\zeta=\zeta(r)$ explicit, consider the two tangents of $U$ passing through the point  $\bp=(0,1+r)^\top$ in the boundary of $U_\zeta$; see Fig.~\ref{fig:circle}, left.  These two lines touch $U$ in the two points $\bx_\pm$ of the intersection of the boundary of $U$ with the Thales circle with center $\bp/2$ and radius $(1+r)/2$. The isosceles triangle $\bo,\bp/2, \bx_+$ thus has two sides of length $(1+r)/2$ and one of length $1$, so its angle at $\bo$ is $\alpha=\arccos(1/(1+r))$. Thus, the right triangle $\bo,\bp,\bx_+$ has angle $\beta=\pi/2-\alpha$ at $\bp$, and so 
\begin{equation*}
	\zeta(r)=\pi-2\beta=2\arccos\Big(\frac1{1+r}\Big).
\end{equation*}
This concludes the proof. 
\end{proof}
The next result shows in particular that parts of the barrier that are very far away from $U$ (i.e.~in cases where $r$ is very large) contribute almost with their full lengths to the deviation from Jones' bound.

 \begin{proposition}\label{prop2}
 	Let $r\ge 0$, let  $B$ be a barrier of the unit disc $U$ and define  $B_{\mathrm{out}}=B\setminus [(1+r)U]$. Then 
\begin{equation}\label{eq:Bout}
	|B|-\pi\ge \frac{r}{1+r}|B_{\mathrm{out}}|.
\end{equation} 	
 \end{proposition}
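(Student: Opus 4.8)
The plan is to read \eqref{eq:Bout} as a direct instance of Corollary \ref{coro1}: it suffices to choose the angle $\zeta$ so that the excluded disc $(1+r)U$ is exactly $U_\zeta$, after which the factor $1-\cos\tfrac{\zeta}{2}$ produced by that corollary should simplify to $\tfrac{r}{1+r}$.

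First I would fix $r\ge 0$ and set $\zeta:=2\arccos\!\big(1/(1+r)\big)$. Since $1/(1+r)\in(0,1]$ we have $\arccos\!\big(1/(1+r)\big)\in[0,\pi/2)$ and hence $\zeta\in[0,\pi)$, so $\zeta$ is an admissible parameter for Corollary \ref{coro1}. By Proposition \ref{prop:Uzeta} (equivalently, by inverting the monotone relation \eqref{eq:zeta_r} between $r$ and $\zeta$) this choice gives $U_\zeta=(1+r)U$, so that $B_{\mathrm{out}}=B\setminus[(1+r)U]=B\setminus U_\zeta$.

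Next I would check that $B_{\mathrm{out}}$ is a rectifiable subset of $B$ lying entirely outside $U_\zeta$: writing the barrier $B$ as an at most countable union of rectifiable curves and intersecting each of them with the open set $\RR^2\setminus[(1+r)U]$ again yields an at most countable union of rectifiable curves, while $U_\zeta=(1+r)U$ is closed, so no point of $B_{\mathrm{out}}$ lies in $U_\zeta$. Corollary \ref{coro1}, applied with $K=U$ (whose half-perimeter is $p=\pi$) and $B'=B_{\mathrm{out}}$, then gives
\[
|B|-\pi\ \ge\ |B_{\mathrm{out}}|\Big(1-\cos\tfrac{\zeta}{2}\Big).
\]
Finally, $\cos\tfrac{\zeta}{2}=\cos\arccos\!\big(1/(1+r)\big)=\tfrac{1}{1+r}$, hence $1-\cos\tfrac{\zeta}{2}=\tfrac{r}{1+r}$, which is precisely \eqref{eq:Bout}. (When $r=0$ everything degenerates to $\zeta=0$ and the inequality reduces to Jones' bound $|B|\ge\pi$.)

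I do not expect a genuine obstacle: the statement is essentially a specialization of Corollary \ref{coro1}. The two points that deserve a word of care are the bookkeeping of the half-perimeter convention (the relevant constant for $U$ in Corollary \ref{coro1} is $\pi$, matching its use via Lemma \ref{lem5}) and the elementary remark that cutting a rectifiable barrier by an open set leaves it rectifiable, so that Corollary \ref{coro1} really does apply to $B_{\mathrm{out}}$.
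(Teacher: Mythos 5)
Your proof is correct and follows exactly the same route as the paper's: invoke Proposition \ref{prop:Uzeta} to identify $U_\zeta=(1+r)U$ with $\cos\tfrac{\zeta}{2}=1/(1+r)$, then apply Corollary \ref{coro1} with $B'=B_{\mathrm{out}}$. The extra remark about rectifiability of $B_{\mathrm{out}}$ is a harmless bit of bookkeeping the paper leaves implicit.
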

 \begin{proof}
 	According to Proposition \ref{prop:Uzeta}, we have $(1+r)U=U_\zeta$ with $\zeta$ given by \eqref{eq:zeta_r}, so $\cos\tfrac\zeta2=1/(1+r)$.  Corollary \ref{coro1} with $B'=B_{\mathrm{out}}$ now gives 
 	\[
 	|B|-\pi\ge (1-\cos\tfrac\zeta2)|B_{\mathrm{out}}|=\frac{r}{1+r}|B_{\mathrm{out}}|,
 	\]
 	as claimed. 
 \end{proof}
 
\begin{figure}[h]
	\begin{center}
	
	\begin{tikzpicture}[scale=1.5]
	\draw[fill=black!5] (0,0) circle (1);	
			\filldraw (0,0) circle (.4pt);
			\draw (0,0)  circle (1);
			\draw (-0.5,-0.3) node[anchor=west] {$U$};
			\draw[color=red] (0,0) 	circle (1.2);
			\draw[color=red] (0,-1.2) node[anchor=north] {$U_\zeta=(1+r)U$};
			
			\draw [blue,<-] (57.8:1.05) .. controls (65:1.6) and (55:1.3) .. (45:1.4) node[anchor=west]{$\bx_+$};
			\draw [blue,<-] (122.5:1.05) .. controls (115:1.6) and (125:1.3) .. (135:1.4) node[anchor=east]{$\bx_-$};

			\draw[very thin,gray] (90:0.03)  -- (90:1.2);

			\filldraw (0,1.2) circle (.6pt);
			\filldraw (0,0.6) circle (.6pt);
			\filldraw (0.56,0.82) circle (.6pt);
			\filldraw (-0.56,0.82) circle (.6pt);
			
			\draw[dashed] (-0.56,1.58) -- (1.68,0.08);
			\draw[dashed] (0.56,1.58) -- (-1.68,0.08);
			\draw[very thin,gray] (0.56,0.82)  -- (0,0);
			\node at (0,1.4) {$\bp$};

			\draw (0,0.6)  circle (0.6);

	\end{tikzpicture}
	\qquad
	\begin{tikzpicture}[scale=1.5]  
			
			\draw[fill=black!5] (0,0) circle (1);
			
			\draw [color=cyan!40!white,fill=cyan!40!white]
			(87:1.2) node[anchor=south,color=cyan!40!black] {$\qquad R_0$}
			-- (3:1.2)  arc (3:87:1.2)--cycle;
			\draw [color=white,fill=white](78.6:1.2)--(11.4:1.2) arc (11.4:78.6:1.2)--cycle;
			\draw (0.68,0.71) node[anchor=west]{$\scriptstyle\bx_0$};
			\filldraw (0.71,0.71) circle (.6pt);
			\draw (50:0.7) node[anchor=south,color=black] {$\ell_0\ $};
			\draw[<->,black] (45:0.03)  -- (45:0.87);
			\draw[black] (45:0.4) node[anchor=west]{$t$};
			\draw[<->] (270:0.03)  -- (270:0.97);
			\draw (270:0.4) node[anchor=west]{$1$};
			\draw [color=black]	(87:1.2)-- (3:1.2);
			\draw [blue,<-] (3:1.24) .. controls (6:1.5) and (9:1.8) .. (0:2) node[anchor=north]{$\bp_{0-}$};
			\draw [blue,<-] (87:1.24) .. controls (89:1.9) and (70:1.4) .. (60:1.5) node[anchor=west]{$\bp_{0+}$};
			\begin{scope}[rotate around={90:(0,0)}]
				\draw [color=cyan!40!white,fill=cyan!40!white]
				(87:1.2) -- (3:1.2)  arc (3:87:1.2)--cycle;
				\draw (80:1.2) node[anchor=east,color=cyan!40!black] {$R_1$};
				\draw [color=white,fill=white](78.6:1.2)--(11.4:1.2) arc (11.4:78.6:1.2)--cycle;
				\draw [color=green!80!black,fill=green!80!black]
				(72:1)--(19:1) arc (19:72:1)--cycle;
				\draw (46:0.72) node[color=green!60!black,anchor=north]{$U\cap R_1$};	
				\draw (0.69,0.71) node[anchor=south]{$\scriptstyle \bx_1$};
				\filldraw (0.71,0.71) circle (.6pt);
				\filldraw[blue] (3:1.2) circle (.3pt);
				\draw [blue,<-] (3:1.24) .. controls (4:1.5) and (9:1.5) .. (15:1.5) node[anchor=east]{$\bp_{1+}$};
				\filldraw[blue] (87:1.2) circle (.3pt);
				\draw [blue,<-] (87:1.24) .. controls (89:1.7) and (80:1.6) .. (70:1.6) node[anchor=south]{$\bp_{1-}$};
				
			\end{scope}
			
			\begin{scope}[rotate around={180:(0,0)}]
				\draw [color=cyan!40!white,fill=cyan!40!white]
				(87:1.2) node[anchor=north,color=cyan!40!black] {$R_2$\phantom{xxx}} 	-- (3:1.2)  arc (3:87:1.2)--cycle;
				
				\draw [color=white,fill=white](78.6:1.2)--(11.4:1.2) 	arc (11.4:78.6:1.2)--cycle;
				\draw (0.66,0.71) node[anchor=east]{{$\scriptstyle\bx_2$}};
				\filldraw (0.71,0.71) circle (.6pt);
				\draw [blue,<-] (87:1.23) .. controls (89:1.6) and 	(100:1.4) .. (100:1.4) node[anchor=west]{$\bp_{2-}$};
				\draw [blue,<-] (3:1.24) .. controls (4:1.3) and (5:1.4) .. (5:1.4) node[anchor=east]{$\bp_{2+}$};
				
			\end{scope}
			
			\begin{scope}[rotate around={270:(0,0)}]
				\draw [color=cyan!40!white,fill=cyan!40!white]
				(87:1.2) node[anchor=west,color=cyan!40!black] 	{$R_3$} 	-- (3:1.2)  arc (3:87:1.2)--cycle;
				
				\draw 	[color=white,fill=white](78.6:1.2)--(11.4:1.2) 	arc (11.4:78.6:1.2)--cycle;
				\draw (0.8,0.59) node[anchor=west]{$\scriptstyle\bx_3$};
				\filldraw (0.71,0.71) circle (.6pt);
				
			\end{scope}

			\filldraw (0,0) circle (.4pt);
			\draw (0,0)  circle (1);
			\draw (-0.5,-0.2) node[anchor=west] {$U$};
			\draw[color=red] (0,0) 	circle (1.2);
			\draw[color=red] (1.1,.5) node[anchor=west] {$U_\zeta=(1+r)U$};
			\filldraw[blue] (3:1.2) circle (.3pt);
			\filldraw[blue] (87:1.2) circle (.3pt);
			\begin{scope}[rotate around={90:(0,0)}]
				\filldraw[blue] (3:1.2) circle (.3pt);
				\filldraw[blue] (87:1.2) circle (.3pt);	
			\end{scope}
			\begin{scope}[rotate around={180:(0,0)}]
				\filldraw[blue] (3:1.2) circle (.3pt);
				\filldraw[blue] (87:1.2) circle (.3pt);	
			\end{scope}
			
		\end{tikzpicture}
	\end{center}
	\caption{	\label{fig:circle} Left: The construction in the proof of Proposition \ref{prop:Uzeta}. 
			The unit disc $U$ is shown in light gray.
	The disc $U_\zeta$ is bounded by the red circle. The two tangent lines (dashed) to the disc $U$ through the point $\bp$ are displayed. In addition, we depict the Thales circle containing the origin, the point $\bp$ and the  tangent points $\bx_\pm$.
	\\	Right: The sets $U$ and $U_\zeta$ are shown as on the left.  The strips $R_i$ are marked in light blue and have width $1-t$. The segment $\ell_0$ and $\bp_{i\pm}$, $i=0,1,2$ are indicated as well. Note that the parameters used in the proof are on an entirely different scale from those in the figure.}
\end{figure}

 To apply the methods from \cite{pach}, we consider the four points
\[
\bx_0=\tfrac{1}{\sqrt{2}}(1,1)^\top, \quad
\bx_1=\tfrac{1}{\sqrt{2}}(-1,1)^\top,  \quad½
\bx_2=\tfrac{1}{\sqrt{2}}(-1,-1)^\top,  \quad
\bx_3=\tfrac{1}{\sqrt{2}}(1,-1)^\top,
\]
on the boundary of $U$. 
Note that the unique outer normal of $U$ at $\bx_i$ is $\bx_i$, $i=0,\ldots,3$.
\medskip

For  $r\ge 0$ (associated to $U_{\zeta(r)}$)
inscribe a square $Q({r})$ in $U_{\zeta(r)}=(1+r)U$ such that its sides are parallel to the vectors $\bx_0,\bx_1,\bx_2,\bx_3$. 
We now fix the number  $0\le r<\sqrt 2-1$ to assure that $U\setminus Q(r)$ is not empty, and contains in particular the points $\bx_0,\bx_1,\bx_2,\bx_3$. For $t_r=(1+r)/\sqrt 2\le  1$ and a variable $t\in[t_r,1]$, to be determined later,  we now construct the sets $R_i=R_i(r,t)$,  by setting 
\[
R_i=\Big\{\bx\in (1+r)U: t\le \langle \bx,\bx_i\rangle \le1\Big\},
\] 
$i=0,\ldots,3$, see Fig.~\ref{fig:circle}.  If $t=1$, the set $R_i$ is a chord in $(1+r)U$ with midpoint $\bx_i$. For $t=t_r$, the sets $R_i$ are strips that meet any of their neighboring strips in one point each.

 The line segment  $\ell_i=\{\bx\in (1+r)U: \langle \bx,\bx_i\rangle =t\}$ is a subset of the boundary of $R_i$.   We will need the endpoints of $\ell_i=[\bp_{i+},\bp_{i-}]$, where  
 \begin{align}\label{def:pi}
 	\bp_{0\pm}=t\bx_0\pm w\bx_1,\qquad \bp_{1\pm}=t\bx_1\pm w\bx_0,
 	\qquad \bp_{2\pm}=-\bp_{0\mp},
 \end{align}
 $i=0,1,2$,
 where $w=w(r,t)=\sqrt{(1+r)^2-t^2}$. 
 
 Define $B_i=B\cap R_i$, $i=0,\ldots,3$, and recall that 
 $B_{\mathrm{out}}=B\setminus (1+r)U$.
The union of all lines  $g$ parallel to $\ell_i$ hitting $U\cap R_i$ is a strip of width $1-t$. Any line in this strip must hit $U$ and thus the barrier in $B_i\cup B_{\mathrm{out}}$, implying $|B_i\cup B_{\mathrm{out}}|\ge 1-t$. Since both sets are disjoint, we have $|B_i|\ge 2\eta$, $i=0,1,2,3$, with 
\begin{equation}\label{eq:etaout}
\eta=\tfrac{1}2\big(1-t-|B_{\mathrm{out}}|\big).
\end{equation}
The parameter $\eta$ is positive iff $|B_{\mathrm{out}}|<\frac{r}{1+r}(1-t)$, which is assumed from now on. 
If $\eta'$ is some positive number such that $\eta'\le \eta$, we trivially have $|B_i|\ge 2\eta'$ for $i=0,1,2,3$. Hence,  the following arguments all remain valid after replacing $\eta$ by $\eta'$. This will be exploited in Theorem \ref{thm:optDelta}, where a lower bound $\eta'$ of $\eta$ is obtained from an application of Proposition \ref{prop2}.

With literally the same arguments as in \cite[p.~18]{pach}, we obtain sets $R^+$ and $R^-$ (obtained from two of the $R_i$'s by applying the same rotation about the origin) and subsets $B^+$ and $B^-$ of barriers in them, such that   $|B^+|\ge \eta$ and $|B^-|\ge \eta$ and all segments in $B^+\cup B^-$ making an angle $\ge \lambda:=\pi/8$ with respect to the horizontal axis. 

If  $R^-$ and $R^+$ come from opposite $R_i's$, we may take $R_0$ and $R_2$, both rotated by $-\pi/4$, otherwise they come from 
neighboring $R_i's$ and we may assume that they are equal to $R_0$ and $R_1$, respectively. All sets $R_i$, and thus $R^+\cup R^-$ are contained in the disc $(1+r)U$ with diameter $D=2(1+r)$, but if they are neighboring, $D$ can be chosen smaller. It remains to discuss for which $t$ and $r$  they are separated by strips. \medskip

\emph{The case of opposing $R_i's$.}
We start the discussion with the case where the two sets $R_i$ found above are opposing each other, as this case turns out to be the critical one. We assume $R^+$ is equal to $R_0$, rotated with angle $-\pi/4$, and $R^-$ is equal to $R_2$, rotated with angle  $-\pi/4$ about $\bo$. 
Since $R^-\cup R^+\subset (1+r)U$ and $\|\bp_{0+}-\bp_{2-}\|=2(1+r)$ 
(these points are antipodal), $D=2(1+r)$ is the diameter of the union $R^-\cup R^+$. 
Let $\ell_0'$ and $\ell_2'$ be the rotation of   $\ell_0$ and $\ell_2$
 with angle $-\pi/4$ about $\bo$, respectively. More explicitly, \eqref{def:pi} shows that $\ell_i'=[\bp_{i+}',\bp_{i-}']$, $i=0,2$, with 
 \[
 \bp_{0\pm}'=(t,\pm w)^\top, \qquad \bp_{2\pm}'=(-t,\pm w)^\top, 
 \]
 where $w=\sqrt{(1+r)^2-t^2}$, as before.

 	\begin{figure}[h]
	\begin{center}
		\begin{tikzpicture}[scale=2]  
			
			\draw[fill=black!5] (0,0) circle (1);
			
			\draw [color=cyan!40!white,fill=cyan!40!white]
			(87:1.2) 
			-- (3:1.2) node[anchor=west,color=black] {$R_0$}  arc (3:87:1.2)--cycle;
			\draw [color=white,fill=white]
			(78.6:1.2)--(11.4:1.2)--(11.4:1.5) -- (78.6:1.5) --cycle;
			\draw (50:0.7) node[anchor=south,color=black] {$\ell_0$};
			\draw [color=black]	(87:1.2)-- (3:1.2);
			\begin{scope}[rotate around={90:(0,0)}]
				\draw [color=cyan!40!white,fill=cyan!40!white]
				(87:1.2) -- (3:1.2)  arc (3:87:1.2)--cycle;
				\draw (80:1.2) node[anchor=north east] {$R_1$};
				\draw [color=white,fill=white]
				(78.6:1.2)--(11.4:1.2)--(11.4:1.5) -- (78.6:1.5) --cycle;
				\draw (50:0.7) node[anchor=south,color=black] {$\ell_1$};
				\draw [color=black]	(87:1.2)-- (3:1.2);
			\end{scope}

			\filldraw (0,0) circle (.4pt);
			\draw (0,0)  circle (1);
			\draw (0.2,-0) node[anchor=west] {$U$};
			
			\draw [color=black,dashed]	(0.07,1.4)-- (-.7,-1.2) node[anchor=west] {$g$};
			
		\end{tikzpicture}
	\end{center}
	\caption{	\label{fig:separate}
		A line $g$ separates the strips $R_0$ and $R_1$ if and only if it separates the line segments $\ell_0$ and $\ell_1$.}
\end{figure}

A line $g$  separates $R^-$ and $R^+$ if and only if it separates $\ell_0'$ and $\ell_2'$,  see Fig.~\ref{fig:separate}.
The slope of a separating line must thus be between the slopes of the line $g_+$ through $\bp_{0+}'$ and $\bp_{2-}'$,  and the line $g_-$ through $\bp_{0-}'$ and $\bp_{2+}'$. Since
\begin{equation}
	\label{eq:diff_diagon}
	\bp_{0+}'-\bp_{2-}'=2(t,w)^\top,
\end{equation}  the unit vector $\tfrac{1}{1+r}(t,w)^\top$ is parallel to $g_+$, which thus  makes angle 
\[
\alpha_+=\arccos\Big(\frac{t}{1+r}\Big)
\]
with the horizontal axis. 
By symmetry with respect to the vertical axis, the corresponding angle for $g_-$ is $\alpha_-=\pi-\alpha_+$. Hence,  $R^-$ and $R^+$ are $\lambda$-separated with $\lambda=\pi/8$ 
iff $\alpha_+<\pi/8$, i.e.~iff
\begin{equation}
	\label{eq:lambda_necessary.new}
	t>(1+r)\cos\tfrac{\pi}{8}=\tfrac12 \sqrt{2+\sqrt2}(1+r)\approx 0.924(1+r). 
\end{equation}
This is stronger than the original condition $t\ge t_r=\tfrac12\sqrt{2}(1+r)\approx0.707(1+r)$. Since $t\le 1$, we also must have 
\begin{equation}
	\label{eq:r_necessary.new}
	r<  \frac1{\cos\frac\pi8}-1=  \frac{2}{\sqrt{2+\sqrt2}}-1\approx 0.082.
\end{equation}
In the following, we assume that  \eqref{eq:lambda_necessary.new} and \eqref{eq:r_necessary.new} hold. 
We now aim for the largest $0<\gamma<\lambda$ such that strips of width $W=\eta\sin(\gamma)$ and angles $\pm(\lambda-\gamma)$ separate $\ell_0'$ and $\ell_2'$, i.e.~such that 
\begin{equation}\label{eq:opt_gamma.new}
	\bp_{0+}'(\lambda-\gamma)-\bp_{2-}'(\lambda-\gamma)\ge \eta\sin \gamma. 
\end{equation}
In view of \eqref{eq:diff_diagon}, the trigonometric addition theorem and the fact that we have chosen $\lambda=\pi/8$, the left hand side of \eqref{eq:opt_gamma.new} is 
\begin{align*}
2[t\sin(\lambda-\gamma)-w\cos(\lambda-\gamma)]&=
2[(t\sin\tfrac{\pi}8-w\cos\tfrac{\pi}8)\cos\gamma-
(t\cos\tfrac{\pi}8+w\sin\tfrac{\pi}8)\sin\gamma]. 
\end{align*}

Thus, \eqref{eq:opt_gamma.new} is equivalent to 
\[
\eta\le2[
(t\sin\tfrac{\pi}8-w\cos\tfrac{\pi}8)\cot\gamma-
(t\cos\tfrac{\pi}8+w\sin\tfrac{\pi}8)],
\]
or 
\begin{equation}\label{eq:best_gamma.new}
	\cot\gamma\ge  \frac{\eta+2t\cos\tfrac{\pi}8+2w\sin\tfrac{\pi}8}{2t\sin\tfrac{\pi}8-2w\cos\tfrac{\pi}8}=:h,
\end{equation}
where we used that  $2t\sin\tfrac{\pi}8-2w\cos\tfrac{\pi}8>0$ holds, as this inequality is equivalent to   \eqref{eq:lambda_necessary.new}.  
Since the cotangent function is strictly decreasing on $[0,\pi]$, the largest $\gamma$ satisfying \eqref{eq:best_gamma.new} satisfies this  inequality with equality and thus is equal to $\gamma^*=\mathrm{arccot} (h)$. Again, due to symmetry with respect to the vertical axis, the same is true in direction  with angle $-(\lambda- \gamma)$.

Since $\sin\gamma=(1+\cot^2\gamma)^{-1/2}$, the choice $\gamma=\gamma^*$ yields 
\begin{equation}
	\label{eq:W=W(eta,r,t).new}
	W=\frac{\eta}{\sqrt{1+h^2}}, 
\end{equation}
and Corollary \ref{coro2} (with $D=2(1+r)$, which we saw is best possible) implies 
\begin{equation}\label{eq:forDeltaWithBout}
	|B|-\pi\ge \frac{W^2}{2D}= \frac{\eta^2}{1+h^2}\frac{1}{4(1+r)},
\end{equation}
where the right-hand side depends on the parameters 
$r\in(0,\infty)$, $t\in (0,1]$ and $|B_{\mathrm{out}}|$, obeying \eqref{eq:lambda_necessary.new}, \eqref{eq:r_necessary.new}, and 
$|B_{\mathrm{out}}|<\frac{r}{1+r}(1-t)$.

\begin{thm}\label{thm:optDelta}
	Let $B$ be a straight barrier for the unit disc $U$ and $\delta=|B|-\pi$. 
		For all   $0\le r< \frac1{\cos\frac\pi8}-1$, 
	$\cos\frac\pi8 (1+r)< t\le 1$ the following statement holds. 
	If  
	\begin{equation}\label{eq:bloed} 
	\eta'= \tfrac{1}2\big(1-t-\tfrac{1+r}r\delta\big)\ge0, 
	\end{equation}
	then 
	\begin{equation}\label{eq:forDeltaWithBout'new}
		4(1+r)(1+(h')^2)\delta\ge (\eta')^2,
	\end{equation}	
 where 
	\[
	h'=\frac{\eta'+2t\cos\tfrac{\pi}8+2w\sin\tfrac{\pi}8}{2t\sin\tfrac{\pi}8-2w\cos\tfrac{\pi}8},
	\]
	(this is definition \eqref{eq:best_gamma.new} with $\eta'$ replacing $\eta$) and $w=\sqrt{(1+r)^2-t^2}$. 
\end{thm}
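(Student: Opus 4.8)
\medskip
\noindent\textit{Proof idea.}
The plan is to re-run the argument that produced \eqref{eq:forDeltaWithBout} in the opposing configuration once more, but with the a priori length $\eta$ of \eqref{eq:etaout} replaced everywhere by the hypothesised quantity $\eta'$; Proposition~\ref{prop2} is exactly what makes this substitution admissible. First a few book-keeping remarks. By Jones' bound $|B|\ge\pi$, hence $\delta\ge0$. The denominator $2t\sin\tfrac{\pi}8-2w\cos\tfrac{\pi}8$ occurring in $h'$ is positive, since it is equivalent to the assumed inequality $t>(1+r)\cos\tfrac{\pi}8$ (see the remark after \eqref{eq:best_gamma.new}); thus $h'$ is well defined, and as its numerator is manifestly positive, $h'>0$. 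If $\eta'=0$ there is nothing to prove, because the right-hand side of \eqref{eq:forDeltaWithBout'new} vanishes while every factor on the left is nonnegative; so I would assume $\eta'>0$ from now on, and also $r>0$ (for $r=0$ the term $\tfrac{1+r}r\delta$ makes the hypothesis vacuous unless $\delta=0$, a degenerate case). Finally I would note that the standing ranges of $(r,t)$ force every constraint used in the construction: $t>(1+r)\cos\tfrac{\pi}8>(1+r)/\sqrt2=t_r$ and $r<\tfrac1{\cos(\pi/8)}-1<\sqrt2-1$, so that $U\setminus Q(r)\ne\emptyset$ and \eqref{eq:lambda_necessary.new}, \eqref{eq:r_necessary.new} hold.

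The one substantive input is the inequality $\eta\ge\eta'$. To obtain it, apply Proposition~\ref{prop2} to $B_{\mathrm{out}}=B\setminus(1+r)U$: this gives $|B_{\mathrm{out}}|\le\tfrac{1+r}r\delta$, and inserting it into \eqref{eq:etaout} yields $\eta=\tfrac12\bigl(1-t-|B_{\mathrm{out}}|\bigr)\ge\tfrac12\bigl(1-t-\tfrac{1+r}r\delta\bigr)=\eta'>0$. Consequently $|B_i|=|B\cap R_i|\ge2\eta\ge2\eta'$ for $i=0,1,2,3$, and I may run the selection argument of \cite[p.~18]{pach} recalled above with $\eta'$ in the role of $\eta$; since $B$ is straight, this yields a rotation carrying two of the $R_i$ onto sets $R^-,R^+$ together with $B^\pm\subset R^\pm$ that are unions of segments making angle at least $\lambda=\pi/8$ with the horizontal axis and with $|B^\pm|\ge\eta'$.

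It then remains to choose the separating strips and invoke Corollary~\ref{coro2}, in two cases. In the opposing case ($R^-,R^+$ the $(-\pi/4)$-rotations of $R_0,R_2$) the diameter of $R^-\cup R^+$ is $D=2(1+r)$; I set $\gamma=\gamma^*:=\mathrm{arccot}(h')$. Then $\gamma^*\in(0,\pi/2)$ because $h'>0$, and $\gamma^*<\pi/8=\lambda$ because $h'>\cot\tfrac{\pi}8$; clearing the positive denominators shows that the latter inequality is equivalent to $\eta'\sin\tfrac{\pi}8+2w>0$, which is clear. So $\gamma^*\in(0,\lambda)$, and by the computation \eqref{eq:opt_gamma.new}--\eqref{eq:best_gamma.new} with $\eta'$ in place of $\eta$, equality in \eqref{eq:best_gamma.new} at $\gamma^*$ means precisely that $\ell_0',\ell_2'$, and therefore $R^-,R^+$, are separated by strips of angles $\pm(\lambda-\gamma^*)$ and width $\eta'\sin\gamma^*$. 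Corollary~\ref{coro2} with these data and $D=2(1+r)$ gives $\delta\ge\frac{(\eta'\sin\gamma^*)^2}{2D}$, and substituting $\sin\gamma^*=(1+\cot^2\gamma^*)^{-1/2}=(1+(h')^2)^{-1/2}$ together with $D=2(1+r)$ turns this into \eqref{eq:forDeltaWithBout'new}. If instead $R^-,R^+$ arise from neighbouring $R_i$, the same reasoning applies, but $R^-\cup R^+$ now lies in a disc of strictly smaller diameter $D<2(1+r)$, so Corollary~\ref{coro2} yields a bound on $\delta$ at least as large as the opposing one (the opposing configuration being the critical one, as for the unit square); thus \eqref{eq:forDeltaWithBout'new} holds in that case too, and the theorem follows.

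I expect the only delicate points to be the two compatibility checks that make the pieces fit: that $\gamma^*$ genuinely lies in $(0,\lambda)$, so that Corollary~\ref{coro2} is applicable — which reduces to the trivial inequality $\eta'\sin\tfrac{\pi}8+2w>0$ above — and that the neighbouring configuration cannot produce a weaker estimate than the opposing one, which I would justify by the same comparison of diameters (and of separating strips) underlying the unit-square treatment. Everything else is the routine replacement of $\eta$ by $\eta'$, whose legitimacy rests solely on $\eta\ge\eta'$, that is, on Proposition~\ref{prop2}.
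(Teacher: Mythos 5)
Your treatment of the opposing configuration matches the paper's own (terse) argument exactly: Proposition~\ref{prop2} gives $\eta\ge\eta'$, the strip computation \eqref{eq:opt_gamma.new}--\eqref{eq:W=W(eta,r,t).new} goes through verbatim with $\eta'$ in place of $\eta$, and Corollary~\ref{coro2} with $D=2(1+r)$ yields \eqref{eq:forDeltaWithBout'new}. Your explicit verification that $\gamma^*=\mathrm{arccot}(h')$ lies in $(0,\lambda)$ is a helpful detail the paper leaves implicit, and the reduction $h'>\cot\tfrac\pi8\Longleftrightarrow \eta'\sin\tfrac\pi8+2w>0$ checks out. The paper's proof of the theorem says no more than this (``all the arguments can be repeated using $\eta'$'') and refers only to the derivation of \eqref{eq:forDeltaWithBout}, i.e.\ to the opposing case; the neighbouring configuration is treated afterwards, separately and numerically, only at $(r_0,t_0)$.

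The gap is in your final paragraph, where you dispose of the neighbouring case by arguing that a smaller containing disc ($D<2(1+r)$) makes the bound from Corollary~\ref{coro2} at least as strong, so \eqref{eq:forDeltaWithBout'new} still holds. That inference is incomplete: the bound is $W^2/(2D)$, and the achievable width $W=\eta'\sin\gamma$ also shrinks in the neighbouring case, because the separating gap $\bp_{0+}-\bp_{1-}=\sqrt2\,(t,w)^\top$ is smaller by the factor $1/\sqrt2$ than the opposing gap $\bp_{0+}'-\bp_{2-}'=2(t,w)^\top$, forcing a strictly smaller admissible $\gamma$ and hence a smaller $W$. Whether the decrease in $D$ outweighs the decrease in $W$ is a genuine quantitative trade-off and is not a priori decided in favour of the opposing case; the paper only settles it numerically at the optimized pair $(r_0,t_0)$ (getting $1.214\cdot10^{-6}>1.076457\cdot10^{-6}$), and does not assert that the neighbouring bound dominates for every admissible $(r,t)$. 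You should either restrict the statement to the opposing configuration, which is how the theorem is actually used in the paper, or supply a real proof that the neighbouring bound always dominates; the diameter comparison alone does not give this.
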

\begin{proof}
	The arguments  follow along the lines of the previous proof. 
	Proposition \ref{prop2} implies that $\eta'\ge 0$ is a lower bound for $\eta$. Since the claim is trivial for $\eta'= 0$, 
	we may assume  $\eta'>0$, so all the arguments can be repeated using $\eta'$. 
	One obtains 	
	\begin{equation*}
		\delta\ge \frac{(\eta')^2}{1+(h')^2}\frac{1}{4(1+r)},
	\end{equation*}	
	which shows the assertion after rearranging. 	
\end{proof}

Both sides of \eqref{eq:forDeltaWithBout'new} depend on the unknown $\delta$. But  \eqref{eq:forDeltaWithBout'new} is not satisfied when $\delta=0$, so by continuity there is a smallest $\delta=\delta(r,t)>0$ such that equality holds in \eqref{eq:forDeltaWithBout'new}. 
The largest $\delta^*=\max_{r,t}\delta(r,t)$ can be searched numerically within the ranges of $r$ and $t$, since the inequality amounts in an inequality for a cubic expression in $\delta$. 
We did this\footnote{We can provide the code upon request.} and obtained the parameters 
\begin{equation*}
	r_0:=0.001067, \qquad t_0:= 0.965763, 
\end{equation*}
and thus
$$\delta^* \geq \delta(r_0,t_0) = 1.076457 \cdot 10^{-6}. $$

The corresponding variable in \eqref{eq:bloed} is 
\begin{equation}\label{eq:eta0p}
\tfrac{1}2\big(1-t_0-\tfrac{1+r_0}{r_0}\delta(r_0,t_0)\big)\ge
0.016613=:\eta'_0\ge 0.
\end{equation}

That  $\delta(r_0,t_0)$ indeed is a lower bound for $|B|-\pi$
despite the fact that \eqref{eq:forDeltaWithBout'new} only holds conditioned on \eqref{eq:bloed}, can be seen by contradiction: 
for the fixed values $(r,t)=(r_0,t_0)$
suppose that $|B|-\pi<\delta(r_0,t_0)$. Then, the corresponding $\eta'$ satisfies 
\[
	\eta'= \tfrac{1}2\big(1-t_0-\tfrac{1+r_0}{r_0}(|B|-\pi)\big)\ge
	\tfrac{1}2\big(1-t_0-\tfrac{1+r_0}{r_0}\delta(r_0,t_0)\big)\ge \eta'_0\ge 0, 
\]
so \eqref{eq:forDeltaWithBout'new} must be satisfied with $\delta=|B|-\pi$. Since  \eqref{eq:forDeltaWithBout'new}  is violated for $\delta=0$,  the intermediate value theorem  guarantees the existence of $\delta'$ with $0<\delta'\le |B|-\pi<\delta(r_0,t_0)$ such that \eqref{eq:forDeltaWithBout'new} holds with equality for $\delta=\delta'$, contradicting the definition of $\delta(r_0,t_0)$. 
\smallskip

From now on we fix the parameters $\lambda=\pi/8$, $r=r_0$ and $t=t_0$ and exploit the fact that $|B_i|\ge \eta_0'$, $i=0,1,2,3$, where $\eta_0'$ is given in \eqref{eq:eta0p}.  
It remains to show that also in the case of  neighboring sets $R_i$, 
this parameter choice leads to a deviation from Jones' bound at least of size $ 1.076457 \cdot 10^{-6}$. 
\medskip

\emph{The case of neighboring $R_i's$.} 
Suppose  that  $R^+$ and $R^-$ are  $R_0$ and $R_1$, respectively. 
We choose\footnote{The choice  $\gamma=0.124871$ would be optimal to maximize the width $W$ given $(r,t)$. This  can be shown with arguments similar to those in the first case. We do not need the optimality property in the line of arguments and thus work with a simpler constant.} $\gamma=0.124$ and claim that $R_+$ and $R_-$ 
can be separated by strips with angle $\pm(\lambda - \gamma)$ and width $W=\eta \sin \gamma$. A line $g$  separates $R_0$ and $R_1$ if and only if it separates $\ell_0$ and $\ell_1$. This and symmetry considerations show that the claim is equivalent to 
\begin{equation}\label{eq:opt_gamma1}
	\bp_{0+}(\lambda-\gamma)-\bp_{1-}(\lambda-\gamma)\ge \eta_0'\sin \gamma. 
\end{equation}
The relations in \eqref{def:pi} imply $\bp_{0+}-\bp_{1-}=\sqrt2 (t,w)^\top$, so 
\begin{align*}
\bp_{0+}(\lambda-\gamma)-\bp_{1-}(\lambda-\gamma)&= 
\sqrt2(t\sin(\lambda-\gamma)-w\cos(\lambda-\gamma))
\\&> 0.002169\ge\eta_0'\sin \gamma, 
\end{align*}
confirming \eqref{eq:opt_gamma1}. Corollary \ref{coro2} now implies 
\begin{equation}\label{eq:generalD}
| B | -p\geq  \frac{(\eta_0' \sin \gamma)^2}{2D} > \frac{2.111\cdot 10^{-6}}{D}. 
\end{equation}
As in the case of the unit square, a tight choice of $D$ is required now. The trivial bound $D\le 2(1+r)$ would yield $| B | -p\geq 1.054\cdot 10^{-6}$, which is not sufficient. As in the case of the unit disc, $D$   can be chosen smaller: Let $K$ be a disc centered at the midpoint 
$
\bm=(1/\sqrt2)(0,t-w)^\top
$ 
of $\bp_{0-}$ and $\bp_{1-}$ with radius $\rho:=\|\bm-\bp_{0-}\|=(t+w)/\sqrt2$. It is easy to check that $K$ contains $R_0\cup R_1$, and thus $D=2\rho$ can be chosen in \eqref{eq:generalD}, yielding finally 
\begin{equation*}
	| B | -p\geq  \frac{2.111\cdot 10^{-6}}{ (\sqrt{2}(t+w))} \ge 1.214\cdot 10^{-6}.   
\end{equation*}
Thus, also in this case, the deviation from Jones' bound must be larger than the claimed lower bound and the proof is complete. 
\medskip 

We conclude with two remarks. One could (and we did) also optimize the value of $(r,t)$ in the case of neighboring $R_i's$. Due to the better choice of $D$, however, the case with opposing  $R_i's$ turns out to be the 
critical one. This  is the reason why we focused on optimizing the latter. 

A second remark: As in the case of the unit square, one might suspect that the main result in \cite{steinerberger} can be used to improve the lower bound for the barrier lengths of the unit disc. However, the main result \cite[Theorem]{steinerberger} for $U$ implies that a barrier $B$ with a length close to Jones' bound has an angular orientation measure $\mu_B$ that is approximately uniform on $[0,2\pi)$ and there is no simple way to use this information for barrier parts close to the points $\bx_0,\ldots,\bx_3$.

 

\end{document}